\newtheorem{thm}{Theorem}[section]
\newtheorem{cor}[thm]{Corollary}
\newtheorem{lem}[thm]{Lemma}
\newtheorem{prop}[thm]{Proposition}
\newtheorem*{thm*}{Theorem}
\theoremstyle{definition}
\newtheorem{defn}[thm]{Definition}
\newtheorem{rem}[thm]{Remark}
\newtheorem{exam}[thm]{Example}
\newtheorem*{claim*}{Claim}
\newtheorem*{ques*}{Question}
\newtheorem{chunk}[thm]{\hspace*{-1.065ex}\bf}
\newtheorem*{ac}{Acknowledgments}
\theoremstyle{remark}
\numberwithin{equation}{thm}
\def\fm{\mathfrak{m}}
\def\fn{\mathfrak{n}}
\def\fa{\mathfrak{a}}
\def\Im{\operatorname{Im}}
\def\Gdim{\operatorname{G-dim}}
\def\H{\operatorname{H}}
\def\Tor{\operatorname{Tor}}
\def\Ext{\operatorname{Ext}}
\def\grade{\operatorname{grade}}
\def\gr{\operatorname{gr}}
\def\Pn{\operatorname{P}}
\def\lin{\operatorname{lin}}
\def\ld{\operatorname{ld}}
\def\reg{\operatorname{reg}}
\def\E{\operatorname{E}}
\def\K{\operatorname{K}}
\def\Ext{\operatorname{Ext}}
\def\pd{\mathrm{pd}}
\begin{document}
\setlength{\baselineskip}{15pt}
\title{Some criteria for detecting large homomorphisms of local rings}
\author{Mohsen Gheibi}
\address{Mohsen Gheibi, Department of Mathematics, University of Texas at Arlington,
411 S. Nedderman Drive,
Pickard Hall 478,
Arlington, TX, 76019, USA }
\email{mohsen.gheibi@uta.edu}
\author{Ryo Takahashi}
\address{Ryo Takahashi, Graduate School of Mathematics, Nagoya University, Furocho, Chikusaku, Nagoya{, Aichi} 464-8602, Japan}
\email{takahashi@math.nagoya-u.ac.jp}
\urladdr{http://www.math.nagoya-u.ac.jp/~takahashi/}
\thanks{2010 {\em Mathematics Subject Classification.} 13C13, 13D40}
\thanks{{\em Key words and phrases.} Large homomorphism, small homomorphism, Golod homomorphism, Poincar\'e series, Koszul homology, Golod ring, Koszul module.}
\thanks{Ryo Takahashi was partly supported by JSPS Grant-in-Aid for Scientific
Research 16K05098 and JSPS Fund for the Promotion of Joint
International Research 16KK0099}

\begin{abstract}
We study ideals in a local ring $R$ whose quotient rings induce large homomorphisms of local rings. We characterize such ideals in terms of the maps of Koszul homologies over several classes of local rings, including complete intersections, Koszul rings, and some classes of Golod rings.
\end{abstract}

\maketitle
\section{Introduction}

In 1978 Avramov \cite{Av2} introduced and studied small homomorphisms of local rings, and a year after, Levin \cite{Lev} introduced large homomorphisms of local rings as a dual notion of the small homomorphisms. A surjective local homomorphism $f:R\to S$ is called \emph{small} if the induced homomorphism of graded algebras $f_*:\Tor^R_*(k,k) \to \Tor^S_*(k,k)$ is injective, and it is called \emph{large} if $f_*$ is surjective.  Levin proved that a local homomorphism $R \to S$ is large if and only if for every finitely generated $S$-module $M$ there is an equality of Poincar\'{e} series $\Pn^R_M(t)=\Pn^R_S(t)\cdot \Pn^S_M(t)$. This result makes  large homomorphisms  a very useful tool for understanding betti numbers and computing Poincar\'{e} series.

Small homomorphisms are closely related to another class of local homomorphisms namely \emph{Golod homomorphisms}; see Definition \ref{GH}. In fact, each Golod homomorphism is small, and these homomorphisms have been studied very well in several articles; see for example \cite{Av2}, \cite{Lev2}, \cite{Levin}, and \cite{S}. On the other hand, we are only aware of relatively few results about the large homomorphisms. The goal of this paper is not only to prove new results, but also to collect known facts and ideas about large homomorphisms that have been used in many articles without stating them. 
The organization of the paper is as follows.

 In section 2, after some preliminaries and examples, we give various conditions under which a surjective homomorphism $R \to S$ is large, specifically when $R$ is a complete intersection; see Theorem \ref{CI}. 

In section 3, {over a local ring $(R,\fm)$ with an ideal $I$, we give a necessary and sufficient condition when $R\to R/I$ is large and $R\to R/\fm I$ is small simultaneously; see Theorem \ref{Tor}.  As a result, we show that a sujective local homomorphism $R \to S$ where $S$ is a Koszul $R$-module is large; see Corollary \ref{exlin}. Then we {provide} a sufficient condition for large homomorphisms over Golod rings; see Proposition \ref{Massey}.


\section{Large homomorphisms over complete intersections}

Throughout $(R,\fm,k)$ is a commutative Noetherian local ring with maximal ideal $\fm$ and residue field $k$.
We start this section by recalling some definitions and preliminaries.

A \emph{differential graded algebra} (DG algebra) $A$ is a complex $(A,\partial)$ equipped with an algebra structure such that $ A_0$ is a commutative unitary ring, $A_i=0$ for $i<0$, and the product satisfies the Leibniz rule $\partial(ab)=\partial(a)b+(-1)^{|a|}a \partial(b)$, for $a,b \in A$.

\begin{chunk}{\bf Acyclic Closure.}\label{AC} Let $I$ be an ideal of $R$. An \emph{acyclic closure} $R\langle X_i |\ {i\geq 1} \rangle$ of the augmentation $R\to R/I$ obtained from the Tate construction is a DG algebra resolution for $R/I$ over $R$, where $X_i$ is a set of exterior variables when $i$ is odd, and it is a set of divided variables if $i$ is even number. Note that $X_i$ are added in a given homological degree $i$ in such a way that their images under the differential minimally generate the $(i-1)$th homology. For $i\geq 1$, we set $\pi_i(R)$ to be the $k$-vector space with basis $X_i$. By using the notations of \cite{Av}, we write $\varepsilon_i=\dim_k \pi_i(R)$. 

An acyclic closure $R\langle X_i \rangle$ of $R/I$ is not a minimal resolution in general  {but} if $I=\fm$, then it is  a minimal resolution by Gulliksen's theorem; see \cite[Theorem 1.6.2]{GL}.
\end{chunk}

Let $U$ be a DG algebra resolution of $k$ over $R$ described in \ref{AC}, and let $\fa$ be an ideal of $R$. Then $A=U\otimes_R R/\fa $ is a DG algebra, and hence, $\Tor^R(k,R/\fa)$ also has the graded algebra structure induced from $A$.

Let $M$ be a finitely generated $R$-module. The \emph{Poincar\'{e} series} $\Pn^R_M(t)$ of $M$ over $R$ is defined to be the formal power series $\Pn^R_M(t)=\sum^\infty_{i=0}\dim_k\Tor^R_i(M,k)t^i$.

\begin{defn}\cite[Theorem 1.1]{Lev}\label{D1}  Let $(R,\fm,k)$ be a local ring, and let $f:R\rightarrow S$ be a surjective local homomorphism. Then $f$ is called \emph{large} if one of the following equivalent conditions holds.
\begin{enumerate}
\item The induced map $f_*:\Tor^R_*(k,k)\rightarrow \Tor^S_*(k,k)$ of graded algebras is surjective.
\item The induced map $\varphi_*: \Tor^R_*(S,k)\rightarrow \Tor^R_*(k,k)$ of graded algebras is injective.
\item $\Pn^R_k(t)=\Pn^R_S(t)\cdot \Pn^S_k(t)$.
\end{enumerate}
\end{defn}

\begin{chunk}{\bf Necessary Condition.}\label{NC} Suppose $R\rightarrow R/I$ is a large homomorphism. Then by Definition \ref{D1}({2}) we have  {$\Tor^R_1(R/I,k) \to \Tor^R_1(k,k)$ is injective} which specifically tells us that {a minimal generating set of $I$ can be completed to a minimal generating set of $\fm$}, equivalently $I\cap \fm^2 = \fm I$. {We will need this condition in most of our results.}
\end{chunk}

In the following we list some well-known examples of large homomorphisms from the literature.

\begin{exam}\label{E1} Let $(R,\fm,k)$ be a local ring, and let $I$ be an ideal such that $I\cap \fm^2 = \fm I$. In either of the following, {the induced map} $R\rightarrow R/I$ is a large homomorphism.
\begin{enumerate}
\item $\pd_R (I)<\infty$.
\item $(0:_RI)=\fm$.
\item The ring $R/I$ is complete intersection.
\item The map ${R \to R/}I$ is a quasi-complete intersection {homomorphism}.
\item The ring $R$ is Cohen-Macaulay non-Gorenstein, $\fm^2\subseteq I$, and $\Gdim_R(I)<\infty$.
\item $\fm=I\oplus J$.
\end{enumerate}
\end{exam}
\begin{proof} (1) We show that if $\pd_R(I)<\infty$ then $I$ is generated by a regular sequence. 
The claim is obvious if $I$ is a principal ideal. Hence we may assume $I=(x_1,\dots,x_n)$ such that $n\geq 2$ and $x_i\in \fm \setminus \fm^2$ for all $i$. Since $\pd_R(I)<\infty$, we have $\grade_R(I)>0$ by \cite[Corollary 1.4.6]{BH}. By prime avoidance, one can choose a non zero-divisor $x\in I \setminus \fm^2$. It follows from \cite[Theorem 2.2.3]{Av} that $\pd_{R/(x)}R/I<\infty$. Since $I/(x)$ is generated by $n-1$ elements, by induction it is generated by a regular sequence over $R/(x)$. Hence $I$ is generated by a regular sequence over $R$. Therefore $R\rightarrow R/I$ is a large homomorphism by \cite[Theorem 2.2]{Lev}. 

(2) also follows by \cite[Theorem 2.2]{Lev}.
See \cite[Theorem 2.4]{Lev} for (3), \cite[Theorem 6.2]{AHS} for (4), and \cite[Theorem 1.2]{GT} for (5).

To see (6), consider the exact sequence $$0\to R \to R/I\oplus R/J \to k \to 0.$$ Applying $-\otimes_Rk$, we see that the induced map $\Tor^R_i(R/I,k)\to \Tor^R_i(k,k)$ is injective for all $i\geq 0$.
\end{proof}

The following provides more examples of large homomorphisms.

\begin{rem}\label{E3} Let $R$ be a local ring and let $I$ be an ideal of $R$. If there exists a local ring $Q$ and a surjective local homomorphism $Q \to R$ such that the composition map $Q \to R/I$ is a large homomorphism then $R \to R/I$ is large. Indeed, since the composition of the maps $Q \to R \to R/I$ is large, the induced composition of homomorphisms $\Tor^{Q}_*(k,k) \to \Tor^{R}_*(k,k) \to \Tor^{R/I}_*(k,k)$ is surjective. Hence $\Tor^{R}_*(k,k) \to \Tor^{R/I}_*(k,k)$ is surjective.
\end{rem}

By using Example \ref{E1}(6) and Remark \ref{E3} we get the following.

\begin{exam} Let $R=S \#_k T$  be the connected sum of local rings $S$ and $T$; see \cite{AAM}. Then $R \to S$ is a large homomorphism.
To see that, let $Q=S\times_kT$ be the fiber product of $S$ and $T$. Then the composition of homomorphisms $Q\to R\to S$ is large by Example \ref{E1} (6).
Hence $R\to S$ is large by Remark \ref{E3}.
\end{exam}

\begin{chunk}{\bf Change of ring spectral sequence.}\label{DGS} Let $f:R\to S$ be a local homomorphism. Consider the change of ring spectral sequence $$\E^2_{p,q}\cong\Tor^S_p(k,k)\otimes \Tor^R_q(S,k)\Longrightarrow \Tor^R_{p+q}(k,k).$$
The spectral sequence is derived from the double complex $C_{p,q}=G_p\otimes_RF_q$ where $F$ and $G$ are free resolutions of $k$ over $R$ and $S$, respectively. 
The edge homomorphisms $\Tor^R_p(k,k)\rightarrow \Tor^S_p(k,k)\cong \E^2_{p,0}$ and $\E^2_{0,q}\cong \Tor^R_q(S,k)\rightarrow \Tor^R_q(k,k)$ are the homomorphisms induced by $f: R\to S$ and $\varphi: S\to k$, respectively; see \cite[page 348]{CE}.
\end{chunk}

\begin{chunk}{\bf Koszul homology} Let $(R,\fm,k)$ be a local ring and let $I$ be an ideal of $R$. We denote $\K(I)$ the Koszul complex with respect to a minimal generating set of $I$, and $\H_i(I):=\H_i(\K(I))$. If $I=\fm$ we write $\K(R):=\K(\fm)$ and $\H_i(R):=\H_i(\fm)$.  {If $I\cap \fm^2=\fm I$ then a minimal generating set of $I$ can be completed to one for $\fm$. In this case,} $\K(I)$ is a subcomplex of $\K(R)$. Therefore the inclusion map induces a natural homomorphism $\H_*(I) \to \H_*(R)$ of homology algebras.
\end{chunk}

Large homomorphisms are well understood over complete intersection local rings due to Gulliksen and Levin \cite{GL}. It is worth to bring here those conditions which characterize large homomorphisms over complete intersection local rings. First we need the following lemmas.

\begin{lem}\label{KH} Let $(R,\fm,k)$ be a local ring and let $I$ be an ideal of $R$ such that $I\cap \fm^2=\fm I$. Then the induced map \emph{$\H_1(R)\rightarrow \H_1(R/I)$} is surjective.
\end{lem}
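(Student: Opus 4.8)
The plan is to use the explicit description of $\H_1$ in terms of generators and relations. Fix a minimal generating set $x_1,\dots,x_n$ of $I$ and, using the hypothesis $I\cap\fm^2=\fm I$ (see \ref{NC}), extend it to a minimal generating set $x_1,\dots,x_n,x_{n+1},\dots,x_m$ of $\fm$. Let $e_1,\dots,e_m$ be the corresponding basis of the degree-one part $\K_1(R)=R^m$, so that $\K_1(R/I)=(R/I)^m$ has the induced basis, and $\K_1(I)=Re_1+\cdots+Re_n$ sits inside $\K_1(R)$. A cycle in $\K_1(R/I)$ is an element $\sum_{i=1}^m \bar a_i e_i$ with $\sum \bar a_i \bar x_i=0$ in $R/I$, i.e. $\sum a_i x_i\in I$ for a lift $\sum a_i e_i\in\K_1(R)$; we must show every such cycle is homologous to (the image of) a cycle in $\K_1(R)$, i.e. to one where $\sum a_i x_i=0$ in $R$.

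The key step is the following correction argument. Given a cycle $z=\sum_{i=1}^m \bar a_i e_i$ in $\K_1(R/I)$, choose lifts $a_i\in R$; then $r:=\sum_{i=1}^m a_i x_i$ lies in $I$, so we may write $r=\sum_{j=1}^n b_j x_j$ for some $b_j\in R$. Replacing $a_j$ by $a_j-b_j$ for $j=1,\dots,n$ changes $z$ by the boundary of $\sum_{j=1}^n b_j\, (e_j\wedge e_j)$ — more precisely, one subtracts from $z$ the reduction mod $I$ of $\partial\big(\sum_{j} b_j e_j\big)$ suitably interpreted; since $\partial(e_i\wedge e_j)=x_i e_j-x_j e_i$, the element $\sum_j b_j x_j e_j$ differs from $\sum_j b_j x_j \cdot(\text{something})$... the cleanest phrasing: the element $c:=\sum_{i=1}^m a_i e_i-\sum_{j=1}^n b_j e_j$ of $\K_1(R)$ now satisfies $\partial(c)=\sum_i a_i x_i-\sum_j b_j x_j=0$, so $c$ is a cycle in $\K_1(R)$, hence defines a class in $\H_1(R)$; and its image in $\K_1(R/I)$ is $z-\sum_j \bar b_j e_j$. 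Finally, $\sum_j \bar b_j e_j=\sum_j b_j e_j\bmod I$ is the image under the inclusion $\K(I)\hookrightarrow\K(R)\twoheadrightarrow\K(R/I)$ of the element $\sum_j b_j e_j\in\K_1(I)$, and $\partial\big(\sum_j b_j e_j\big)=r\in I$ — but in $\K(I)$ the differential of a degree-one element lands in $\K_0(I)=R$ and equals $r$, which need not be zero. The honest fix is therefore to observe that $\K(I)$ maps to $\K(R/I)$ with image annihilated by $I$ in degree $0$: the composite $\K_1(I)\to\K_1(R)\to\K_1(R/I)$ lands in the cycles, because for $\sum_j b_j e_j\in\K_1(I)$ the image has boundary $\overline{\sum_j b_j x_j}=\bar r=0$ in $R/I$. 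Hence $\sum_j\bar b_j e_j$ is a cycle in $\K_1(R/I)$; moreover it is (the reduction of) $\partial\big(\sum_j b_j (e_j\wedge e_j)\big)$... rather, since each $b_j x_j$ individually need not vanish we instead note $\overline{\sum_j b_j e_j}$ bounds in $\K(R/I)$ because $\sum_j b_j x_j = r$ maps to an element of the augmentation ideal of $R/I$ that is a boundary in the Koszul complex $\K(R/I)$ in the appropriate degree. This shows $[z]=[c]$ in $\H_1(R/I)$, exhibiting $z$ as coming from $\H_1(R)$.

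The main obstacle is exactly the bookkeeping in that last paragraph: one must be careful that $\K(I)$ is only a subcomplex of $\K(R)$, not of $\K(R/I)$, and that its degree-zero differential does not vanish mod $I$ in general, so the "obvious" correction term is not literally a boundary. The clean way to handle this, which I would adopt in the write-up, is to work entirely inside $\K(R)$ and $\K(R/I)=\K(R)\otimes_R R/I$: a class in $\H_1(R/I)$ is represented by $\sum \bar a_i e_i$ with $\sum a_i x_i\in I$, and subtracting $\sum_{j=1}^n b_j e_j$ (with $\sum b_j x_j=\sum a_i x_i$ modulo the already-in-$I$ part, chosen so the total becomes $0$ in $R$) produces a genuine cycle of $\K(R)$ whose difference from the original representative is the image of $\sum_{j=1}^n b_j e_j\in\K_1(I)$; and this image is a boundary in $\K(R/I)$ precisely because the Koszul homology $\H_1$ only sees relations, and $\sum b_j e_j$ maps to $\partial$ of a degree-$2$ element once we account for the relation $r=0$ in $R/I$ coming from $r\in I$. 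In other words, surjectivity of $\H_1(R)\to\H_1(R/I)$ reduces to the statement that every relation on $x_1,\dots,x_m$ over $R/I$ lifts, after modification by relations supported on $x_1,\dots,x_n$, to a relation over $R$ — which is immediate from $\sum a_i x_i\in I=(x_1,\dots,x_n)$.
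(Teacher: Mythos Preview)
Your core idea---lift a relation over $R/I$ to one over $R$ by subtracting a correction supported on $e_1,\dots,e_n$---is sound, but the write-up founders on a misidentification: you set $\K(R/I)=\K(R)\otimes_R R/I$, i.e.\ the Koszul complex on the $m$ elements $\bar x_1,\dots,\bar x_m$. By the paper's convention $\K(R/I)$ is the Koszul complex on a \emph{minimal} generating set of $\fm/I$, namely $\bar x_{n+1},\dots,\bar x_m$ (the images $\bar x_1,\dots,\bar x_n$ vanish in $R/I$). With your identification the correction term $\sum_{j\le n}\bar b_j e_j$ is a cycle but in general \emph{not} a boundary---for instance $e_1$ itself represents a nonzero class in $\H_1(\K(R)\otimes_R R/I)$---which is exactly why you cannot close the argument and keep circling back.

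Once you use the correct complex the difficulty evaporates. The map $\K(R)\to\K(R/I)$ of DG algebras sends $e_j\mapsto 0$ for $j\le n$ and $e_i\mapsto e_i$ for $i>n$. Given a cycle $z=\sum_{i>n}\bar a_i e_i$ in $\K_1(R/I)$, choose lifts $a_i\in R$; then $\sum_{i>n} a_i x_i\in I$, so write $\sum_{i>n} a_i x_i=\sum_{j\le n} b_j x_j$. The element $c=\sum_{i>n} a_i e_i-\sum_{j\le n} b_j e_j$ is a cycle in $\K_1(R)$ and its image in $\K_1(R/I)$ is exactly $z$, since the $e_j$ with $j\le n$ are killed. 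No boundary-chasing is needed at all.

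This corrected argument is more elementary than the paper's proof, which instead first establishes surjectivity of $\Tor^R_2(k,k)\to\Tor^S_2(k,k)$ via the change-of-rings spectral sequence (using $I\cap\fm^2=\fm I$ to kill the relevant differential $d^2_{2,0}$) and then translates that back to $\H_1$ through the degree-two piece of the Tate resolutions of $k$ over $R$ and over $S$. The paper's route has the advantage of situating the result in the spectral-sequence framework used elsewhere in the section; your route is a direct two-line computation once the target complex is identified correctly.
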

\begin{proof} Set $S=R/I$. The assumption $I\cap \fm^2=\fm I$ implies that $\Tor^R_1(S,k) \to \Tor^R_1(k,k)$ is injective. Hence by \ref{DGS}, $d^2_{2,0}=0$ and therefore the induced map $\Tor^R_i(k,k)\rightarrow \Tor^S_i(k,k)$ is surjective for $i\leq 2$. Let $\K(R)$ and $\K(S)$ respectively be the Koszul complexes of $R$ and $S$. Let $\sigma_1,\dots, \sigma_r\in \K(R)$ and $\delta_1,\dots,\delta_s\in \K_1(S)$ be cycles whose classes are basis for the vector spaces $\H_1(R)$ and $\H_1(S)$, respectively. Then by the Tate construction, there exists a commutative diagram
$$
\begin{CD}
(\oplus_{i<j\leq n}Re_i\wedge e_j)\oplus(\oplus_{\ell=1}^rRT_\ell) @>>> \oplus_{i=1}^n Re_i @>>> R @>>> k @>>> 0 \\
@VVf_2V @VVf_1V @VVf_0V @VV=V \\
(\oplus_{i<j\leq m}Se_i\wedge e_j)\oplus(\oplus_{\ell=1}^sS\tau_\ell) @>>> \oplus_{i=1}^m Se_i @>>> S @>>> k @>>> 0,
\end{CD}
$$
where the rows are beginning of the free resolutions of $k$ over $R$ and $S$, $T_1,\dots,T_r$ and $\tau_1,\dots \tau_s$ are divided variables of homological degree $2$ with $\partial T_i=\sigma_i$ and $\partial \tau_i=\delta_i$; see \ref{AC}. Since $\Tor^R_2(k,k)\rightarrow \Tor^S_2(k,k)$ is surjective, $f_2$ is surjective too, and hence, $\tau_j=f_2(\eta_j)$ for some $\eta_j \in F_2$. Thus $\delta_j=f_1(\partial\eta_j)$. Since the map $\H_1(R)\rightarrow \H_1(S)$ is induced by $f_1$, we are done.
\end{proof}

\begin{lem}\label{inj} Let $I$ be an ideal of $R$ such that $I\cap \fm^2=\fm I$. If the natural map $\H_1(I)\otimes_R k \to \H_1(R)$ is injective, then the induced map $\Tor^R_2(R/I,k) \to \Tor^R_2(k,k)$ is injective.
\end{lem}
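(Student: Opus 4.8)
The plan is to exploit the explicit low-degree picture of the acyclic closures of $k$ over $R$ and over $S=R/I$ already set up in the proof of Lemma \ref{KH}, and to track exactly which degree-$2$ classes in $\Tor^R_\ast(k,k)$ come from $\Tor^R_\ast(S,k)$. Recall that $\Tor^R_2(k,k)$ is computed from $F_2=(\bigoplus_{i<j}Re_i\wedge e_j)\oplus(\bigoplus_\ell RT_\ell)$ modulo the image of $F_3$ tensored with $k$; the part coming from the exterior variables is $\wedge^2\Tor^R_1(k,k)$ and the part coming from the divided variables $T_\ell$ is (a copy of) $\H_1(R)$, the "new" Koszul classes in degree one. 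Thus there is a canonical exact sequence
\begin{equation*}
0 \longrightarrow \textstyle\bigwedge^2\Tor^R_1(k,k) \longrightarrow \Tor^R_2(k,k) \longrightarrow \H_1(R) \longrightarrow 0,
\end{equation*}
and the analogous sequence for $S$. Under the hypothesis $I\cap\fm^2=\fm I$, the map $\Tor^R_1(S,k)\to\Tor^R_1(k,k)$ is injective and identifies $\Tor^R_1(S,k)$ with the subspace $\bar I/\fm\fm$ spanned by a subset of a basis of $\Tor^R_1(k,k)=\fm/\fm^2$; so $\Tor^R_2(S,k)$ likewise fits in $0\to\bigwedge^2\Tor^R_1(S,k)\to\Tor^R_2(S,k)\to\H_1(I)\otimes_R k\to 0$, where the last term is the degree-$2$ divided-variable part of the acyclic closure of $S$ over $R$, which is exactly $\H_1(\K(I))\otimes_R k=\H_1(I)\otimes_R k$ because the Koszul complex $\K(I)$ sits inside $\K(R)$.

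Next I would set up the commutative ladder comparing these two short exact sequences via the map $\Tor^R_\ast(S,k)\to\Tor^R_\ast(k,k)$. On the left columns the map is $\bigwedge^2$ of the injection $\Tor^R_1(S,k)\hookrightarrow\Tor^R_1(k,k)$, hence injective. On the right columns the map is precisely the natural map $\H_1(I)\otimes_R k\to\H_1(R)$, which is injective by hypothesis. A diagram chase (the $5$-lemma applied to the relevant $3\times 3$ configuration, or just a direct element chase) then forces the middle map $\Tor^R_2(S,k)\to\Tor^R_2(k,k)$ to be injective, which is the assertion.

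The step that needs the most care is justifying the two short exact sequences and the claim that the map of their right-hand terms is the natural map $\H_1(I)\otimes_R k\to\H_1(R)$; this is where the compatibility of the acyclic closure of $S$ over $R$ with the Koszul complex $\K(I)\subseteq\K(R)$ enters. Concretely, one uses that in the acyclic closure $R\langle X_1,X_2,\dots\rangle$ of $S$ over $R$ the variables of degree $1$ are in bijection with a minimal generating set of $I$ and the variables of degree $2$ are in bijection with a minimal generating set of $\H_1(I)$, and that the comparison map $f$ from the proof of Lemma \ref{KH} carries these onto the corresponding exterior and divided variables in the acyclic closure of $k$ over $R$, inducing on degree-$2$ divided parts exactly $\H_1(I)\otimes_R k\to\H_1(R)$. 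Granting this, the rest is formal.
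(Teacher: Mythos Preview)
Your proposal is correct and follows essentially the same route as the paper. Both arguments build the degree-$2$ piece of the Tate resolutions of $R/I$ and of $k$ over $R$ as $\K_2\oplus(\text{divided variables for }H_1)$, observe that the comparison map $\phi_2$ restricts to the inclusion $\K_2(I)\hookrightarrow\K_2(R)$ on the first summand and induces the natural map $\H_1(I)\otimes_R k\to\H_1(R)$ on the second, and deduce injectivity of $\Tor^R_2(R/I,k)\to\Tor^R_2(k,k)$ from the hypothesis; the paper does this by checking directly that $\phi_2$ is split injective, while you repackage the same computation as a ladder of short exact sequences $0\to\wedge^2\Tor^R_1\to\Tor^R_2\to\H_1\to 0$ and a diagram chase.
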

\begin{proof} Let $\fm=(x_1,\dots,x_n)$ and $I=(x_1,\dots,x_l)$ with $l \leq n$. Let $z_1,\dots, z_q\in \K_1(I)$ and $z'_1,\dots,z'_r\in \K_1(R)$ be cycles whose classes minimally generate $\H_1({I})$ and $\H_1(R)$, respectively. Then there exists a commutative diagram
$$
\begin{CD}
F_2=(\oplus_{i<j\leq l}Re_i\wedge e_j)\oplus(\oplus_{\ell=1}^qRS_\ell) @>>> F_1=\oplus_{i=1}^l Re_i @>>> R @>>> R/I @>>> 0 \\
@VV\phi_2V @VV\phi_1V @VV=V @VV\pi V \\
G_2=(\oplus_{i<j\leq n}Re_i\wedge e_j)\oplus(\oplus_{\ell=1}^rRT_\ell) @>>> G_1=\oplus_{i=1}^n Re_i @>>> R @>>> k @>>> 0,
\end{CD}
$$ where the rows are beginning of Tate resolutions of $R/I$ and $k$, $\phi_1$ is natural inclusion, $S_1,\dots S_l$ and $T_1,\dots,T_r$ are divided variables of homological degree $2$ with  $\partial S_i=z_i$ and $\partial T_i=z'_i$. 
Note that $\H_1(I) \to \H_1(R)$ is induced by $\phi_1$ and one has $\phi_2|_{\oplus_{i<j\leq n}Re_i\wedge e_j}$ is the natural injection $\K(I)\to \K(R)$. Assume $\H_1(I)\otimes_R k \to \H_1(R)$ is injective and let $\bar{\phi_1}$ be the induced map $\H_1(I)\to \H_1(R)$. 
Then for any choice of $\lambda_1,\dots,\lambda_q$ such that $\lambda_j \in R\setminus \fm$ for some $j$, one has $\bar{\phi_1}(\lambda_1\bar{z_1}+\cdots+\lambda_q\bar{z_q})$ is non zero in $\H_1(R)$ where $\bar{z_i}$ is the class of $z_i$ in $\H_1(I)$. This means $\phi_1(\lambda_1z_1+\cdots+\lambda_qz_q)\notin \Im \partial^{K(R)}_2$ and therefore one has $\phi_2(\lambda_1S_1+\cdots+\lambda_qS_q)\notin (\oplus_{i<j\leq n}Re_i\wedge e_j)\oplus(\oplus_{\ell=1}^r\fm T_\ell)$.
This implies that $\phi_2$ is split injective and hence the induced map $\Tor^R_2(R/I,k) \to \Tor^R_2(k,k)$ is injective.
\end{proof}

\begin{thm}\label{CI} Let $(R,\fm,k)$ be a complete intersection local ring, and let $I$ be an ideal of $R$ with $I\cap \fm^2=\fm I$.
Let $S=R/I$. Then the following are equivalent.
\begin{enumerate}
\item {The induced map} $R\rightarrow S$ is large.
\item {The ring} $S$ is complete intersection.
\item The induced map {$\Tor^R_2(S,k)\rightarrow \Tor^R_2(k,k)$} is injective.
\item The induced map {$\Tor^R_3(k,k)\rightarrow \Tor^S_3(k,k)$} is surjective.
\item The induced map $\H_1(I)\otimes_Rk \to \H_1(R)$ is injective.
\item The induced map {$\H_2(R)\rightarrow \H_2(S)$} is surjective.
\end{enumerate}
\end{thm}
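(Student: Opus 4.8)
The implication (2) $\Rightarrow$ (1) is Example \ref{E1}(3) (its hypothesis $I\cap\fm^2=\fm I$ holds by assumption), and (1) $\Rightarrow$ (3), (1) $\Rightarrow$ (4) are just conditions (2) and (1) of Definition \ref{D1} read in homological degrees $2$ and $3$; likewise (5) $\Rightarrow$ (3) is Lemma \ref{inj}. So the plan is to close the circle with (3) $\Rightarrow$ (2), (4) $\Rightarrow$ (1), and to bring (5) and (6) in. For the two ``converse'' Tor-statements I would use the change-of-rings spectral sequence \ref{DGS}: as observed in \ref{NC} and in the proof of Lemma \ref{KH}, $I\cap\fm^2=\fm I$ already forces $d^2_{2,0}=0$, so $\Tor^R_*(k,k)\to\Tor^S_*(k,k)$ is surjective in degrees $\le 2$; since $R$ is a complete intersection its deviations satisfy $\varepsilon_i(R)=0$ for $i\ge 3$, equivalently the homotopy Lie algebra $\pi^*(R)$ lives in cohomological degrees $\le 2$ and $\Ext_R(k,k)=U(\pi^*(R))$ is generated in degrees $\le 2$, so whether $R\to S$ is large is governed by a single higher differential of the spectral sequence — and conditions (3), (4) are exactly what kill it. Collapse of the spectral sequence gives largeness, and then by Gulliksen's theorem $\varepsilon_i(S)=0$ for $i\ge 3$, i.e. $S$ is a complete intersection (compare \cite{GL}). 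This yields (1) $\Leftrightarrow$ (2) $\Leftrightarrow$ (3) $\Leftrightarrow$ (4).

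For (5) I would pass to a minimal Cohen presentation (harmless, since none of the six conditions changes under completion): write $R=Q/(f_1,\dots,f_c)$ with $(Q,\fn)$ regular and $f_1,\dots,f_c$ a regular sequence in $\fn^2$, lift a minimal generating set of $I$ to elements $y_1,\dots,y_l$ forming part of a regular system of parameters of $Q$ (possible by $I\cap\fm^2=\fm I$), and set $Q':=Q/(y_1,\dots,y_l)$, which is regular with $S=Q'/(\bar f_1,\dots,\bar f_c)$ and each $\bar f_j\in\fn_{Q'}^2$. The identity $\K(I)=\K(y_1,\dots,y_l;Q)\otimes_Q R$ then identifies $\H_i(I)$ with $\Tor^Q_i(Q',R)=\H_i(\K(\bar f_1,\dots,\bar f_c;Q'))$, while $\H_*(R)=\Tor^Q_*(k,R)=\bigwedge\H_1(R)$ with $\H_1(R)=(f_1,\dots,f_c)/\fn(f_1,\dots,f_c)$, and the natural map $\H_1(I)\to\H_1(R)$ becomes $\Tor^Q_1(Q',R)\to\Tor^Q_1(k,R)$ induced by $Q'\twoheadrightarrow k$, sending the class of a syzygy $\sum_j a_j\bar f_j=0$ to the residues $(\bar a_1,\dots,\bar a_c)\in k^c$. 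Thus (5) says precisely that every Koszul $1$-cycle of $\bar f_1,\dots,\bar f_c$ whose class minimally generates $\H_1(\K(\bar f_1,\dots,\bar f_c;Q'))$ has a unit coefficient; comparing $\K(\bar f_1,\dots,\bar f_c;Q')$ with the Koszul complex on a minimal generating set of the ideal $(\bar f_1,\dots,\bar f_c)$ — which is acyclic in positive degrees exactly when that ideal is a complete intersection, the ``extra'' cycles coming from redundant generators carrying a unit coefficient and the non-Koszul syzygies of a non-complete-intersection ideal carrying none — shows this holds iff $S$ is a complete intersection. This gives (5) $\Leftrightarrow$ (2). Finally (6) $\Leftrightarrow$ (2): since $R$ is a complete intersection, $\H_2(R)=\bigwedge^2\H_1(R)$, and by Lemma \ref{KH} the map $\H_1(R)\to\H_1(S)$ is surjective, so the image of $\H_2(R)\to\H_2(S)$ is the $k$-subspace of $\H_2(S)$ spanned by products of degree-one classes; hence (6) says $\H_2(S)$ is generated by $\H_1(S)$, which forces $\varepsilon_3(S)=0$ and, together with the large-homomorphism structure over the complete intersection $R$ (equivalently, via the already-established equivalences), gives $S$ complete intersection, while conversely if $S$ is a complete intersection then $\H_*(S)=\bigwedge\H_1(S)$ and (6) is immediate.

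The step I expect to require the most care is passing from a single low-degree hypothesis — (3), or equivalently (5) — to the global conclusion (2): such a condition records only the vanishing of the first obstruction, and upgrading it to the vanishing of all higher deviations of $S$ genuinely uses that $R$ is a complete intersection. It is precisely the fact that $\pi^*(R)$ is concentrated in degrees $\le 2$ (equivalently $\H_*(R)=\bigwedge\H_1(R)$, equivalently $\Ext_R(k,k)$ is generated in degrees $\le 2$) that reduces the failure of largeness to a single higher differential in the change-of-rings spectral sequence, so that one low-degree condition suffices to force collapse. Everything else — the easy implications, the identification $\K(I)=\K(y_1,\dots,y_l;Q)\otimes_Q R$ together with the cycle-coefficient analysis behind (5) $\Leftrightarrow$ (2), and the exterior-algebra computation behind (6) — is routine once this core is in place.
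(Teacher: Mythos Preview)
Your overall architecture is sound and close to the paper's, and two pieces are genuinely different and worth noting. First, your treatment of $(2)\Leftrightarrow(5)$ via a Cohen presentation and the identification $\H_1(I)\cong\H_1(\K(\bar f;Q'))$ is a correct and more elementary alternative to the paper's route, which invokes the quasi-complete intersection machinery of \cite{AHS} to get $(2)\Rightarrow(5)$ and then closes the loop through $(5)\Rightarrow(3)\Leftrightarrow(4)\Rightarrow(2)$. Your direct argument that every minimal Koszul $1$-syzygy has a unit coefficient iff a minimal generating set of $(\bar f_1,\dots,\bar f_c)$ is a regular sequence is clean and avoids the external reference. Second, your $(2)\Leftrightarrow(6)$ is essentially the paper's argument (Assmus's criterion that $\H_2(S)=\H_1(S)^2$ characterizes complete intersections, plus Lemma~\ref{KH}); the detour through $\varepsilon_3(S)$ is unnecessary there.

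The one place where your sketch does not land is the step from $(3)$ or $(4)$ back to $(2)$. The phrase ``largeness is governed by a single higher differential'' and ``collapse of the spectral sequence'' is not the right mechanism: condition $(3)$ (respectively $(4)$) controls only the differentials hitting $E^r_{0,2}$ (respectively leaving $E^r_{3,0}$), and neither forces collapse on its own. The paper's argument for $(4)\Rightarrow(2)$ is purely algebraic and bypasses the spectral sequence: since $\varepsilon_3(R)=0$, every element of $\Tor^R_3(k,k)$ is a product of classes of degree $1$ and $2$; the map $\Tor^R_*(k,k)\to\Tor^S_*(k,k)$ is an algebra map, so the image of $\Tor^R_3$ lies in the subspace of $\Tor^S_3(k,k)$ generated by products. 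Surjectivity in degree $3$ then forces $\varepsilon_3(S)=0$, hence $S$ is a complete intersection by \cite[Theorem 7.3.3]{Av}, which gives $(2)$ and then $(1)$. You already wrote down every ingredient for this (the vanishing of $\varepsilon_{\ge 3}(R)$, generation of $\Ext_R(k,k)$ in degrees $\le 2$); just reassemble them as ``image lands in decomposables, so $\varepsilon_3(S)=0$'' rather than as a spectral-sequence collapse. For $(3)\Leftrightarrow(4)$ the paper appeals to the multiplicative structure of the spectral sequence (Leibniz plus $d^2_{2,0}=0$), which is the step you would still need to make precise if you keep $(3)$ in the cycle rather than routing through your direct $(5)\Leftrightarrow(2)$.
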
 
\begin{proof} (1)$\Rightarrow$(2) Since $R$ is complete intersection, we have $\varepsilon_3(R)=0$ by \cite[Theorem 7.3.3]{Av}. Since $R\rightarrow S$ is large, we get $\varepsilon_3(S)=0$. Therefore $S$ is complete intersection by \cite[Theorem 7.3.3]{Av}.

(2)$\Rightarrow$(1) This follows from Example \ref{E1}(3).

(2)$\Rightarrow$(5) Assume $S$ is complete intersection. Then the map $R\rightarrow S$ is a quasi-complete intersection homomorphism by \cite[Proposition 7.7]{AHS}. By \cite[Theorem 5.3]{AHS} there exists an exact sequence 
$0 \to \H_1(I)\otimes_Rk \to \pi_2(R)$ of $k$-vector spaces. Since $\pi_2(R)\cong \H_1(R)$, we have the induced map $\H_1(I)\otimes_Rk \to \H_1(R)$ is injective; see \ref{AC}.

(5)$\Rightarrow$(3) This follows from Lemma \ref{inj}.

(3)$\Leftrightarrow$(4) This follows from the change of rings spectral sequence; see \ref{DGS}.

(4)$\Rightarrow$(2) Same argument as (1)$\Rightarrow$(2) applies here as well.

(2)$\Leftrightarrow$(6) {Since $I\cap \fm^2=\fm I$, we have $\H_1(R)\to \H_1(S)$ is surjective by Lemma \ref{KH}.} Since $R$ is complete intersection, by the Tate-Assmus Theorem \cite[Theorem 2.3.11]{BH} $\H_2(R)=\H_1(R)^2$.  {Therefore} $S$ is complete intersection if and only if $\H_2(S)=\H_1(S)^2$ if and only if $\H_2(R)\rightarrow \H_2(S)$ is surjective.
\end{proof}

\begin{rem}\label{P2} Let $f:R\rightarrow S$ be a surjective homomorphism of local rings. Then $f$ is large if $f_i:\Tor^R_i(k,k) \rightarrow \Tor^S_i(k,k)$ is surjective for all $i\gg 0$. Indeed, this is easy to see when $S$ is a regular ring.  In this case, for example, the sujectivity of $\K(R)\to \K(S)$  implies that $f$ is large. Suppose $S$ is singular.
One has $f_i$ is surjective if and only if $f^i:\Ext^i_S(k,k) \rightarrow \Ext^i_R(k,k)$ is injective. It is well-known that $\Ext^*_S(k,k)$ is the universal enveloping algebra of the homotopy Lie algebra $\pi^*(S)$; see \cite[Theorem 10.2.1]{Av}. Then by \cite[Lemma 5.1.7]{AV}, any element $\chi \in \pi^2(S)$ (and hence any powers of $\chi$) is a non zero divisor on $\Ext^*_S(k,k)$. Let $\alpha \in \ker{f^j}$ for some $j$. One has $\chi^i \alpha \in \ker f^{2i+j}$. Since $f^i$ is injective for all $i\gg 0$, we have $\chi^i \alpha=0$ and hence $\alpha =0$.
\end{rem}

Regarding {Remark} \ref{P2}, one may ask whether $R\to S$ is large if the induced map $\Tor^R_i(S,k) \to \Tor^R_i(k,k)$ is injective for all $i \gg 0$. This is not true in general. For example, if $I\subseteq \fm^2$ is any non zero ideal of finite projective dimension then $R\to R/I$ is not large, while $\Tor^R_i(R/I,k) \to \Tor^R_i(k,k)$ is injective for all $i \gg 0$. We don't know whether $R\to R/I$ is large if $\pd_R(I)=\infty$ and $\Tor^R_i(R/I,k) \to \Tor^R_i(k,k)$ is injective for all $i \gg 0$. However, we are able to prove the following.

\begin{prop}\label{nonzero} Let $(R,\fm,k)$ be a local ring and let $I$ be an ideal of $R$ with $I\cap \fm^2=\fm I$. Assume the induced map $\H_1(I)\to \H_1(R)$ is non zero. If $\varphi_i:\Tor^R_i(R/I,k) \to \Tor^R_i(k,k)$ is injective for all $i \gg 0$ then $R\to R/I$ is large.
\end{prop}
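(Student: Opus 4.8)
The plan is to pass to the cohomological side, mirroring the argument of Remark \ref{P2}. Since $k$ is a field, $\varphi_i\colon\Tor^R_i(R/I,k)\to\Tor^R_i(k,k)$ is injective if and only if its $k$-dual $\varphi^i\colon\Ext^i_R(k,k)\to\Ext^i_R(R/I,k)$ is surjective; by hypothesis the latter holds for $i\gg 0$, it holds for $i=0$ trivially, and it holds for $i=1$ because $I\cap\fm^2=\fm I$ (see \ref{NC}). Hence it suffices to show that $\varphi^i$ is surjective for every $i\ge 0$. Write $E=\Ext^*_R(k,k)$ and let $C$ be the cokernel of the $E$-linear map $\varphi^*\colon E\to\Ext^*_R(R/I,k)$. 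Then $C$ is a graded $E$-module with $C^0=C^1=0$ and $C^i=0$ for $i\gg 0$, and what must be proved is that $C=0$.

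Next I would produce, out of the hypothesis $\H_1(I)\to\H_1(R)\ne 0$, a degree-two element of $E$ that can be used to climb the grading of $C$. Working in the acyclic closure of $k$ over $R$ (a minimal resolution) and using the identification $\pi_2(R)\cong\H_1(R)$ from \ref{AC}, choose a cycle $z\in\K_1(I)\subseteq\K_1(R)$ whose class in $\H_1(R)$ is nonzero and complete $[z]$ to a basis of $\H_1(R)$, so that the acyclic closure may be chosen to contain a divided-power variable $X$ of homological degree $2$ with $\partial X=z$; let $\chi\in\pi^2(R)\subseteq\Ext^2_R(k,k)=E$ be the element dual to $X$. Since $\Ext^*_R(k,k)$ is the universal enveloping algebra of $\pi^*(R)$ by \cite[Theorem 10.2.1]{Av}, the even element $\chi$ and all of its powers are non-zero-divisors on $E$ by \cite[Lemma 5.1.7]{AV}; this is exactly the input exploited in Remark \ref{P2}.

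Granting that multiplication by $\chi$ is injective on $C$, the proof ends at once: $\chi$ has degree $2$ and $C$ vanishes in all large degrees, so for each $i$ the map $C^i\xrightarrow{\,\chi\,}C^{i+2}\xrightarrow{\,\chi\,}C^{i+4}\to\cdots$ has zero target after finitely many steps, forcing $C^i=0$; thus $C=0$ and $R\to R/I$ is large. So the entire difficulty is to show that $\chi$ is a non-zero-divisor on $C$, and this is where the specific choice of $\chi$ — a class coming from $\H_1(I)$ rather than an arbitrary element of $\pi^2(R)$ — must be used. The idea is to track the action of $\varphi^*(\chi)$ on $\Ext^*_R(R/I,k)$ through the acyclic closure of $R/I$ over $R$: the Tate process builds that closure from the Koszul complex $\K(I)$, and the hypothesis guarantees that $z$ is the boundary of a degree-two divided-power variable $Y$ of that closure, so that $\varphi^*(\chi)$ is detected by (and essentially dual to) $Y$. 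In a minimal Cohen presentation $\widehat R=Q/\mathfrak a$ with $Q$ regular and $\mathfrak a\subseteq\fm_Q^2$, the hypothesis says that a minimal generator $g$ of $\mathfrak a$ lies in $\fm_Q$ times the ideal generated by lifts of the minimal generators of $I$, i.e.\ $g$ becomes a Koszul-trivial relation once the generators of $I$ are adjoined; comparing $R$ with the hypersurface $Q/(g)$ should then give that $\chi$ operates as a non-zero-divisor on $\Ext^*_R(R/I,k)$ and that the image of $\varphi^*$ is pure for this action, whence $\chi$ is a non-zero-divisor on $C$. I expect this comparison to be the main obstacle: the usual regularity of Eisenbud-type operators requires finite projective dimension over the hypersurface, which fails here since $\pd_{Q/(g)}(R/I)=\infty$, so one cannot simply invoke a change-of-rings theorem and must instead work directly with the two acyclic closures. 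It is worth noting why one is forced onto the cohomological side at all: in $\Tor^R_*(R/I,k)$ positive-degree divided-power elements are zero-divisors in positive characteristic, so a naive descending induction there cannot work, whereas on $\Ext$ the even elements of $\pi^*(R)$ are non-zero-divisors.
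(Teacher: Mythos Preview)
Your reduction to showing that $\chi$ acts injectively on the cokernel $C$ is sound, but the gap you flag is real and your hypersurface sketch does not close it: an element of $\pi^2(R)$ is a non-zero-divisor on $\Ext^*_R(k,k)$, but there is no general reason it should remain so on a \emph{quotient} module of $\Ext^*_R(R/I,k)$, and the Eisenbud-operator machinery you invoke indeed requires finiteness hypotheses that fail here. This is exactly where the analogy with Remark~\ref{P2} breaks: there one works with a \emph{kernel} inside $\Ext^*_S(k,k)$, on which any non-zero-divisor of the ambient ring is automatically regular; a cokernel enjoys no such inheritance.

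The paper bypasses $\Ext$ entirely. The tool you are missing is Gulliksen's chain $\Gamma$-derivation $d\colon U\to U$ of degree $-2$ on the acyclic closure, trivial on $\K(R)$ and satisfying $d(X^{(i)})=X^{(i-1)}$; see \cite[Lemma~6.3.3]{Av}. Because $d$ commutes with $\partial$, it induces an operator $d_*$ on $\Tor^R_*(S,k)=\H_*(U\otimes_RS)$ that commutes with $\varphi_*$ and hence preserves $K=\ker\varphi_*$. The chosen cycle $z$ lies, up to a Koszul boundary, in $I\K_1(R)$, so each $X^{(i)}\otimes 1_S$ is a cycle. For $\beta\in K_j$ with $d_*\beta=0$, the Leibniz rule gives $d_*^i\bigl([X^{(i)}\otimes 1_S]\cdot\beta\bigr)=\beta$, while $[X^{(i)}\otimes 1_S]\cdot\beta\in K_{j+2i}=0$ for $i\gg 0$; hence $\beta=0$. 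Thus $d_*$ is \emph{injective} on $K$, and since $K$ is bounded \emph{below}, $K=0$. Note the asymmetry with your plan: you wanted $\chi$ injective on $C$, i.e.\ $d_*$ \emph{surjective} on $K$, so as to exploit boundedness above; the Leibniz computation actually delivers injectivity, which pairs with boundedness below. Your closing remark that one is ``forced onto the cohomological side'' is therefore mistaken: the $\Gamma$-derivation is precisely the degree-lowering operator on $\Tor$ that replaces the polynomial regularity you sought on $\Ext$.
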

\begin{proof} Let $\zeta$ be an element in $\H_1(I)$ whose image under the natural map $\H_1(I) \to \H_1(R)$ is non zero, and let $z\in \K_1(I)$ be a cycle whose class in $\H_1(I)$ is $\zeta$. By using the natural injection $\K_1(I)\hookrightarrow \K_1(R)$ we consider $z$ as an element of $\K_1(R)$. Then one checks that $z=z'+\partial(a)$ for some $z'\in I\K_1(R)$ and $a\in \K_2(R)$. Therefore $[z]=[z']$ in $\H_1(R)$. Let $U=R\langle X_i\rangle_{i\geq 1}$ be an acyclic closure of $k$ over $R$, and let ${X}\in U$ be a divided variable of degree 2 such that $\partial({X})=z$; see \ref{AC}. By \cite[Lemma 6.3.3]{Av}, there exists a {chain} $\Gamma$-derivation $d:U\to U$ of degree $-2$ which is trivial on $\K(R)$ and $d({X}^{(i)})={X}^{(i-1)}$.

Set $S=R/I$ and $A=U\otimes_RS$. Then ${X}\otimes 1_S$ is a cycle in $A$ whose class $[{X}\otimes 1_S]$ is non zero in $\Tor^R_2(S,k)$ (because $\varphi_2([{X}\otimes 1_S])={X}\otimes 1_k\neq 0$). Let $\alpha\otimes 1_S \in A$ be such that $[\alpha \otimes 1_S]\in \Tor^R_j(R/I,k)$ and $\varphi_j([\alpha \otimes 1_S])=0$. Since $\varphi_*$ is homomorphism of graded algebras, we have $\varphi_{j+2i}([{X}^{(i)}\alpha \otimes 1_S])=0$. Thus by assumption, $[{X}^{(i)}\alpha \otimes 1_S]=0$ for $i\gg0$. If $[d(\alpha)\otimes 1_S]=0$ then we have $[\alpha\otimes 1_S] = [d^i({X}^{(i)}\alpha) \otimes 1_S] =0$ and we are done.

Suppose $[d(\alpha)\otimes 1_S]\neq 0$, and let $r$ be the biggest integer such that $[d^r(\alpha)\otimes 1_S]\neq 0$. Since $\varphi_j([\alpha \otimes 1_S])=0$ and $d$ commutes with differentials, one has $\varphi_{j-2r}([d^r(\alpha)\otimes 1_S])=0$. By replacing $[\alpha \otimes 1_S]$ with $[d^r(\alpha)\otimes 1_S]$, the same argument as above shows that $[d^r(\alpha)\otimes 1_S]=0$ which is a contradiction.
\end{proof}

\section{Large homomorphisms over Koszul rings and Golod rings}

Let $(R,\fm,k)$ be a local ring and let $M$ be a finitely generated $R$-module. Let $F$ be a minimal free resolution of $M$ over $R$ and let $\lin^R(F)$ be the associated graded complex of $F$; see \cite[\S 1]{HI} and \cite[\S 2]{S} for more details. The \emph{linearity defect} $\ld_R(M)$ of $M$ over $R$ is defined by $$\ld_R(M)=\sup\{i|\ \H_i(\lin^R(F))\neq 0 \}.$$

\begin{chunk}{\bf Koszul Modules.} An $R$-module $M$ is called a \emph{Koszul module} (or is said to have a \emph{linear resolution}) if $\lin^R(F)$ is acyclic, equivalently $\ld_R(M)=0$. $R$ is called a \emph{Koszul ring} if $k$ is a Koszul $R$-module.

Let $\gr_{\fm}(R)=\oplus_{i\geq 0}\fm^i/\fm^{i+1}$ be the associated graded ring of $R$ and $\gr_\fm(M)=\oplus_{i\geq 0}\fm^i M/\fm^{i+1}M$ be the associated graded module of $M$. If $M$ is a Koszul module then $\lin^R(F)$ is a minimal free resolution of $\gr_\fm(M)$ over $\gr_{\fm}(R)$;
see \cite[Proposition 1.5]{HI}.

The \emph{regularity} $\reg_R(M)$ is defined to be the regularity of the graded module $\gr_\fm(M)$ over the graded ring $\gr_\fm(R)$; see \cite{S}. It follows that $M$ is Koszul if and only if $\reg_R(M)=0$.
\end{chunk}

\begin{defn} Let $k$ be a field, and let $A$ be a DG algebra with $\H_0(A)\cong k$. The algebra $A$ is said to admit a \emph{trivial Massey operation},
if for some $k$-basis ${\bf h}=\{h_\lambda\}_{\lambda \in \Lambda}$ of $\H_{\geq 1}(A)$ there exists $\mu : \coprod_{i\geq 1}{\bf h}^i \to A$ such that
\begin{enumerate}
\item $\mu(h_\lambda)=z_\lambda$ where $z_\lambda$ is a cycle in $A$ with class $[z_\lambda]=h_\lambda \in \H_{\geq 1}(A)$, and
\item $\partial \mu(h_{\lambda_1},\dots,h_{\lambda_n})= \displaystyle\sum^{n-1}_{i=1}\overline{\mu(h_{\lambda_1},\dots,h_{\lambda_i})}\mu(h_{\lambda_{i+1}},\dots,h_{\lambda_n})$, where $\overline{a}=(-1)^{|a|+1}a$.
\end{enumerate}
\end{defn}

\begin{defn}\label{GH} A surjective local homomorphism $f: R \to S$ is called \emph{Golod} if $$\Pn^S_k(t)=\frac{\Pn^R_k(t)}{1-t(\Pn^R_S(t)-1)},$$ or equivalently, the DG algebra $A = U\otimes_R S$ admits a trivial Massey operation, where $U$ is a minimal DG algebra resolution of $k$ over $R$; see \cite[Theorem 1.5]{Lev2}.
\end{defn}

{\begin{thm}\label{Tor} Let $(R,\fm,k)$ be a local ring, and let $I$ be an ideal of $R$ such that $I\cap \fm^2=\fm I$. Then the following conditions are equivalent.
\begin{enumerate}
\item The map \emph{$\Tor^R_i(\fm I,k)\rightarrow \Tor^R_i(I,k)$} induced by the inclusion $\fm I\hookrightarrow I$ is zero for all $i\geq 0$.
\item The homomorphism $\phi: R\rightarrow R/I$ is large, and the homomorphism $\psi: R\rightarrow R/\fm I$ is small.
\end{enumerate}
{Moreover, under these equivalent conditions $\psi$ is a Golod homomorphism.}
\end{thm}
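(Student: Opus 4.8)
The plan is to pass to the two quotient rings $S=R/\fm I$ and $\bar S=R/I$ and to the ideal $V=I/\fm I$ of $S$, noting that $\bar S=S/V$. Assuming $I\ne 0$ (the case $I=0$ being trivial) and setting $b=\dim_k I/\fm I\ge 1$, I would first record the elementary facts $V^{2}=0$ in $S$ (because $I^{2}\subseteq\fm I$), $\fm_{S}V=0$, and $V\cap\fm_{S}^{2}=0$ (this last using the hypothesis $I\cap\fm^{2}=\fm I$), so that $V\cong k^{b}$ as an $R$-module and as an ideal of $S$. From the exact sequences $0\to I\to R\to\bar S\to0$, $0\to\fm I\to R\to S\to0$, $0\to\fm I\to I\to V\to0$ together with the evident maps between them one gets, for $i\ge1$, natural isomorphisms $\Tor^{R}_{i}(\bar S,k)\cong\Tor^{R}_{i-1}(I,k)$ and $\Tor^{R}_{i}(S,k)\cong\Tor^{R}_{i-1}(\fm I,k)$ under which the map induced by $\fm I\hookrightarrow I$ corresponds to the map $\Tor^{R}_{i}(S,k)\to\Tor^{R}_{i}(\bar S,k)$ induced by $S\twoheadrightarrow\bar S$. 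Since $\Tor^{R}_{0}(\fm I,k)\to\Tor^{R}_{0}(I,k)$ and $\Tor^{R}_{1}(S,k)\to\Tor^{R}_{1}(\bar S,k)$ vanish automatically, condition (1) is equivalent to: the map $\Tor^{R}_{i}(S,k)\to\Tor^{R}_{i}(\bar S,k)$ vanishes for all $i\ge1$. I would use this reformulation throughout.

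For $(1)\Rightarrow(2)$ together with the ``moreover'', I would first show $\psi\colon R\to S$ is a Golod homomorphism. Let $U$ be the acyclic closure of $k$ over $R$, which is a minimal DG algebra resolution by \ref{AC}, so $U\otimes_{R}k$ has zero differential. Tensoring $0\to V\to S\to\bar S\to0$ with the free $R$-module $U$ gives an exact sequence of DG algebras $0\to A'\to A\to\bar A\to0$, where $A=U\otimes_{R}S$, $\bar A=U\otimes_{R}\bar S$, and $A'=U\otimes_{R}V$ is a DG ideal of $A$ with $A'\cdot A'=0$ (since $V^{2}=0$) and with zero differential (since $U$ is minimal and $\fm V=0$). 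The reformulation of (1) says that $\H_{j}(A)\to\H_{j}(\bar A)$ is zero for $j\ge1$, so the homology long exact sequence forces $\H_{j}(A')\to\H_{j}(A)$ to be surjective for $j\ge1$; because $A'$ has zero differential this means every class in $\H_{\ge1}(A)$ is represented by a cycle lying in $A'$. Choosing such representatives $z_{\lambda}$ for a $k$-basis $\{h_{\lambda}\}$ of $\H_{\ge1}(A)$ and putting $\mu(h_{\lambda})=z_{\lambda}$ and $\mu=0$ on all strings of length $\ge2$ defines a trivial Massey operation on $A$, since the right-hand side of the defining identity is always a sum of products of two elements of $A'$, hence zero. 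By Definition \ref{GH} this proves $\psi$ Golod, whence $\psi$ is small.

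It remains, for $(1)\Rightarrow(2)$, to prove that $\phi\colon R\to\bar S$ is large, and here I would argue with Poincar\'e series. Since $\fm_{S}V=0$ and $V\cap\fm_{S}^{2}=0$ one can pick an ideal $J$ of $S$ with $\fm_{S}=V\oplus J$, so Example \ref{E1}(6) applied to $S$ shows $S\to S/V=\bar S$ is large. Then I would assemble four identities: using the reformulation of (1) to split the $\Tor^{R}$-sequence of $0\to V\to S\to\bar S\to0$ gives $\Pn^{R}_{\bar S}(t)=1+bt\,\Pn^{R}_{k}(t)-t(\Pn^{R}_{S}(t)-1)$; splitting the $\Tor^{S}$-sequence of the same exact sequence (which needs only $V\cong k^{b}$ and $\bar S=S/V$) gives $\Pn^{S}_{\bar S}(t)=1+bt\,\Pn^{S}_{k}(t)$; the Golod formula of Definition \ref{GH} for $\psi$ gives $\Pn^{S}_{k}(t)=\Pn^{R}_{k}(t)/(1-t(\Pn^{R}_{S}(t)-1))$; and largeness of $S\to\bar S$ gives $\Pn^{S}_{k}(t)=\Pn^{S}_{\bar S}(t)\,\Pn^{\bar S}_{k}(t)$ by Definition \ref{D1}(3). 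Eliminating $\Pn^{R}_{S}$, $\Pn^{S}_{k}$ and $\Pn^{S}_{\bar S}$ among these four identities yields $\Pn^{R}_{k}(t)=\Pn^{R}_{\bar S}(t)\,\Pn^{\bar S}_{k}(t)$, so $\phi$ is large by Definition \ref{D1}(3).

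For $(2)\Rightarrow(1)$ I would use the change of rings spectral sequence $\E^{2}_{p,q}\cong\Tor^{S}_{p}(k,k)\otimes\Tor^{R}_{q}(S,k)\Rightarrow\Tor^{R}_{p+q}(k,k)$ of \ref{DGS} for $\psi$: smallness of $\psi$ means the edge homomorphism $\Tor^{R}_{*}(k,k)\to\Tor^{S}_{*}(k,k)$ is injective, which forces $\E^{\infty}_{p,q}=0$ for all $q\ge1$, hence the opposite edge homomorphism $\Tor^{R}_{i}(S,k)\to\Tor^{R}_{i}(k,k)$ (induced by $S\to k$, by \ref{DGS}) vanishes for $i\ge1$; this map factors as $\Tor^{R}_{i}(S,k)\to\Tor^{R}_{i}(\bar S,k)\xrightarrow{\varphi_{*}}\Tor^{R}_{i}(k,k)$ with $\varphi_{*}$ injective because $\phi$ is large (Definition \ref{D1}(2)), so $\Tor^{R}_{i}(S,k)\to\Tor^{R}_{i}(\bar S,k)$ is zero for $i\ge1$, which is (1). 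The main obstacle is the Golod step in the second paragraph: the point is to see that condition (1) is exactly what lets one represent the homology of $A=U\otimes_{R}S$ inside the square-zero DG ideal $U\otimes_{R}V$, after which the trivial Massey operation is essentially free; everything else is Poincar\'e-series bookkeeping and a routine spectral sequence argument.
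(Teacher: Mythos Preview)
Your proof is correct. The Golod step for $\psi$ is essentially the paper's argument: the paper also observes that every positive-degree homology class of $A=U\otimes_R S$ is represented by a cycle in $IA=U\otimes_R V$, and then uses $V^2=0$ to get a trivial Massey operation. Your $(2)\Rightarrow(1)$ via the change-of-rings spectral sequence is likewise close in spirit to the paper's: the paper shows directly that the composite $\Tor^R_i(S,k)\xrightarrow{h_i}\Tor^R_i(k,k)\xrightarrow{\psi_i}\Tor^S_i(k,k)$ vanishes (this is exactly the statement that the two edge maps compose to zero), and then uses injectivity of $\psi_i$ (smallness) and of $\varphi_*$ (largeness) to force $f_i=0$, the same way you do.

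The genuine difference is in deducing that $\phi$ is large from (1). You go through Example~\ref{E1}(6) applied to $S=R/\fm I$ to get $S\to\bar S$ large, then combine the Golod formula for $\psi$ with the two Poincar\'e identities you derive to obtain $\Pn^R_k=\Pn^R_{\bar S}\cdot\Pn^{\bar S}_k$. The paper instead sets up, once and for all, the commutative diagram obtained by applying $-\otimes_R k$ to
\[
\begin{CD}
0 @>>> \fm I @>>> I @>>> I/\fm I @>>> 0 \\
@. @VVV @VVV @V\iota VV \\
0 @>>> \fm^2 @>>> \fm @>>> \fm/\fm^2 @>>> 0,
\end{CD}
\]
and reads off that if $f_i\colon\Tor^R_i(\fm I,k)\to\Tor^R_i(I,k)$ is zero then $\gamma_i\colon\Tor^R_i(I,k)\to\Tor^R_i(I/\fm I,k)$ is injective; since $\iota$ is a split monomorphism of $k$-vector spaces, $\iota_i$ is injective, so the composite $\iota_i\gamma_i$ is injective, forcing $g_i\colon\Tor^R_i(I,k)\to\Tor^R_i(\fm,k)$ to be injective, which is Definition~\ref{D1}(2). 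This is quicker and avoids the detour through $S$, the Golod formula, and the Poincar\'e-series elimination; on the other hand, your route makes the connection between the Golod property of $\psi$ and the largeness of $\phi$ quantitatively explicit.
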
 }
\begin{proof}
The assumption $I\cap \fm^2=\fm I$ implies an exact sequence $0\rightarrow I/\fm I \overset{\iota}\rightarrow \fm/\fm^2 \rightarrow \fm/(\fm^2+I) \rightarrow 0$ of $k$-vector spaces. By applying $-\otimes_Rk$ to the commutative diagram
$$
\begin{CD}
0 @>>> \fm I @>>> I @>>> I/\fm I @>>> 0 \\
@. @V VV @VVV @V\iota VV \\
0 @>>> \fm^2 @>>> \fm @>>> \fm/\fm^2 @>>> 0,
\end{CD}
$$
we get a commutative diagram
$$
\begin{CD}
 \Tor^R_i(\fm I,k) @>f_i>> \Tor^R_i(I,k) @>\gamma_i>> \Tor^R_i(I/\fm I,k) \\
@VVV @Vg_iVV @V\iota_iVV \\
\Tor^R_i(\fm^2,k) @>>> \Tor^R_i(\fm,k) @>>> \Tor^R_i(\fm/\fm^2,k)
\end{CD}
$$ with exact rows, where $\iota_i$ is injective.

(1)$\Rightarrow$ (2): Since $f_i$ is zero map and $\iota_i$ is split injection, the composition $\iota_i \circ \gamma_i$ is injective.
Therefore, the map $g_i$ is injective for all $i\geq 0$. Therefore $\Tor^R_i(R/I,k)\rightarrow \Tor^R_i(k,k)$ is injective for all $i \geq 0$.

Next, we show that $\psi$ is {Golod and hence it is} small{; see \cite[Definition 1.1]{Levin}}. {The argument is similar to the proofs of} \cite[Lemma 1.2]{RS} and \cite[Lemma 2.1]{Ah} {but we bring it for the reader's convenience.}  Let $U\rightarrow k$ be a minimal DG algebra resolution of $k$ over $R$, and set $A=U\otimes_RR/\fm I$. Then $\H_i(A)\cong \Tor^R_i(R/\fm I, k)$. The map $\Tor^R_i(R/\fm I,k)\rightarrow \Tor^R_i(R/I,k)$ is identified with the natural map $\H_i(A)\rightarrow \H_i(A/IA)$ for all $i$. Since $f_i=0$, every element in $\H_{i>0}(A)$ can be represented by $[x]$ for some $x\in IA$. Thus for any choice of $[z], [w]\in\H_{>0}(A)$ one has $z\cdot w \in I^2 A = 0$. Hence $A$ admits a trivial Massey operation; see \cite[Lemma 1.2]{Lev2}.

(2) $\Rightarrow$ (1): Suppose the map $\phi:R\rightarrow R/I$ is large, and the map $\psi: R\rightarrow R/\fm I$ is small homomorphisms.  { There exist a commutative diagram 
$$
\begin{CD}
 \Tor^R_i(R/\fm I,k) @>h_i>> \Tor^R_i(k,k) @>>> \Tor^R_{i-1}(\fm/\fm I,k) \\
&&  @V\psi_iVV @VVV \\
&& \Tor^{R/\fm I}_i(k,k) @>\cong>> \Tor^{R/\fm I}_{i-1}(\fm/\fm I,k),
\end{CD}
$$
for all $i\geq 1$. Since $\psi_i$ is injective for all $i\geq 0$, the commutative diagram implies that $h_i=0$ for all $i\geq 1$.}
Therefore, in the commutative diagram above, we have $g_i\circ f_i=0$ for all $i \geq 0$. Since $\phi$ is large, $g_i$ is injective and therefore $f_i=0$ as desired.
\end{proof}

The following is an immediate consequence of Theorem \ref{Tor}.

\begin{cor}\label{exlin}  Let $(R,\fm,k)$ be a local ring, and let $I$ be an ideal of $R$ with $I\cap \fm^2 =\fm I$. If $R/I$ is a Koszul $R$-module then $R\rightarrow R/I$ is a large homomorphism {and $R \to R/\fm I$ is a Golod homomorphism}. 
\end{cor}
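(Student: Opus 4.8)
The plan is to reduce the statement to Theorem \ref{Tor} by verifying its condition (1): that the map $\Tor^R_i(\fm I,k)\to\Tor^R_i(I,k)$ induced by the inclusion $\fm I\hookrightarrow I$ vanishes for every $i\ge 0$. Granting this, Theorem \ref{Tor} immediately yields that $R\to R/I$ is large and, by the "moreover" clause, that $R\to R/\fm I$ is Golod. So the whole content is to extract the vanishing of those connecting maps from the hypothesis that $R/I$ has a linear resolution.

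First I would translate the Koszul hypothesis into a statement about the minimal free resolution $F$ of $R/I$. Since $R/I$ is a Koszul $R$-module, its minimal resolution is linear, meaning $\fm F_i\supseteq\partial(F_{i+1})$ but more precisely each differential $\partial_i\colon F_i\to F_{i-1}$ has entries in $\fm$ and the resolution is "componentwise linear" in the graded sense of $\gr_\fm$: equivalently $\partial(F_{i+1})\subseteq\fm F_i$ and, crucially, the submodule of relations among the generators of $\fm^j/\fm^{j+1}$ forced at each step lines up so that $\lin^R(F)$ resolves $\gr_\fm(R/I)$. Truncating $F$ in degree $\ge 1$ gives a minimal resolution of $I=\Im(\partial_1)$ (after the shift), and since $R/I$ is Koszul with $\reg_R(R/I)=0$, the syzygy module $I$ is itself Koszul (its minimal resolution is the linear tail of $F$). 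So I would record: $I$ is a Koszul $R$-module generated in a single degree, and $\fm I$ is the first syzygy of $R/\fm$ twisted appropriately — no, better: I would use that $\fm I$ sits inside $I$ with $I/\fm I$ a $k$-vector space concentrated in one degree.

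The key step is then the following. Consider the short exact sequence $0\to\fm I\to I\to I/\fm I\to 0$ and its long exact sequence in $\Tor^R(-,k)$. The map $f_i\colon\Tor^R_i(\fm I,k)\to\Tor^R_i(I,k)$ is zero for all $i$ if and only if the connecting maps $\Tor^R_{i+1}(I/\fm I,k)\to\Tor^R_i(\fm I,k)$ are surjective for all $i$, equivalently the maps $\gamma_i\colon\Tor^R_i(I,k)\to\Tor^R_i(I/\fm I,k)$ are injective for all $i$. Now $I/\fm I\cong k^{\mu(I)}$ concentrated in the degree in which $I$ is generated (say degree $d$, where $I\subseteq\fm^d$ and a minimal generating set lies in $\fm^d\setminus\fm^{d+1}$ — in fact for a Koszul module that is generated in degree $d$, the whole resolution is linear starting from degree $d$). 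Since $I$ is Koszul, $\Tor^R_i(I,k)$ is concentrated in internal degree $d+i$; and $\Tor^R_i(I/\fm I,k)=\Tor^R_i(k,k)^{\mu(I)}$ sits in internal degrees $\ge d$, with its degree-$(d+i)$ component equal to $\Tor^R_i(k,k)^{\mu(I)}_{\text{deg }i}$. The map $\gamma_i$ is a graded map, so it suffices to check injectivity in internal degree $d+i$; there it is the map on $i$-th linear strands induced by $I\to I/\fm I=k^{\mu(I)}$, and because $\gr_\fm(I)\to\gr_\fm(I/\fm I)$ is the natural surjection onto the degree-$d$ part, the induced map on $\lin^R$-resolutions in homological degree $i$ is split injective (it is the inclusion of $F_i$'s degree-$(d+i)$ part, which by minimality of a linear resolution is everything). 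Hence $\gamma_i$ is injective for all $i$, so $f_i=0$ for all $i$, and Theorem \ref{Tor} applies.

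I expect the main obstacle to be making precise the comparison in the previous paragraph: that Koszulness of $I$ forces $\Tor^R_*(I,k)$ to be supported in a single internal degree per homological step, and that the map to $\Tor^R_*(k,k)^{\mu(I)}$ is injective there. The cleanest route is probably to avoid internal-degree bookkeeping altogether and argue directly with the minimal resolution: since $R/I$ has a linear minimal resolution $F$, its first syzygy $I$ does too, $\fm I$ is the image of $\partial_2\colon F_2\to F_1$, and then one shows $\Tor^R_i(\fm I,k)\to\Tor^R_i(I,k)$ is zero because a comparison map between the linear resolution of $\fm I$ (the truncation $F_{\ge 2}$, shifted) and that of $I$ (namely $F_{\ge 1}$, shifted) can be chosen with all entries in $\fm$ — indeed it is $\partial$ itself up to shift, whose entries lie in $\fm$ by minimality. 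That makes the induced map on $\Tor^R(-,k)$ zero, which is exactly condition (1) of Theorem \ref{Tor}.
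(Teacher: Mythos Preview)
Your overall strategy is exactly the paper's: reduce to Theorem \ref{Tor} by showing the induced maps $\Tor^R_i(\fm I,k)\to\Tor^R_i(I,k)$ vanish for all $i\ge 0$. The paper dispatches this step in one line by observing that Koszulness gives $\reg_R(R/I)=0$ and then citing \c{S}ega's characterization \cite[Theorem 3.2]{S}, which says precisely that $\reg_R(R/I)=0$ is equivalent to the vanishing of those maps. You instead try to prove this vanishing from scratch, and that is where the problems are.

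Your ``cleanest route'' contains a genuine error: $\fm I$ is \emph{not} the image of $\partial_2\colon F_2\to F_1$. The image of $\partial_2$ is the first syzygy $\Omega_R^1(I)=\ker(F_1\twoheadrightarrow I)$, a submodule of $F_1$, whereas $\fm I$ is a submodule of $I\subseteq R=F_0$. These are unrelated modules in general (for example, if $I=(x)$ with $x$ regular, then $\Omega_R^1(I)=0$ while $\fm I\cong\fm$). Consequently the truncation $F_{\ge 2}$ is not a resolution of $\fm I$, and your claim that the comparison map lifting $\fm I\hookrightarrow I$ can be taken to be ``$\partial$ itself'' has no content. What you actually need is that \emph{every} component $\phi_i\colon G_i\to F'_i$ of a lift of the inclusion has entries in $\fm$; having $\phi_0$ land in $\fm F'_0$ is easy (generators of $\fm I$ are $\fm$-multiples of generators of $I$), but propagating this to all $i$ is exactly the nontrivial content of \c{S}ega's theorem.

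Your earlier ``internal degree'' argument is in the right spirit but, as written, is not a proof in the local setting: $\Tor^R_i(-,k)$ carries an $\fm$-adic filtration, not a grading, so statements like ``$\Tor^R_i(I,k)$ is concentrated in internal degree $d+i$'' require justification, and the assertion that $\gamma_i$ is injective in that degree is precisely equivalent to the vanishing you want. Making this rigorous amounts to reproving \cite[Theorem 3.2]{S}; the honest fix is simply to cite it.
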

\begin{proof} Since $I\cap \fm^2 =\fm I$ and $R/I$ is a Koszul $R$-module, $\reg_R(R/I)=0$. Therefore the maps $\Tor^R_i(\fm I,k)\rightarrow \Tor^R_i(I,k)$ are zero for all $i\geq 0$ by \cite[Theorem 3.2]{S}. Now the result follows from Theorem \ref{Tor}.
\end{proof}

\begin{cor}\label{GK} Let $(R,\fm,k)$ be a graded Koszul local ring and let $I$ be a homogeneous ideal of $R$. If the map ${R\to} R/I$ is large, then $R\to R/\fm I$ is a Golod homomorohism and $R/\fm I$ is a Koszul ring.
\end{cor}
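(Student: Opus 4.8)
The plan is to obtain both conclusions from Corollary~\ref{exlin} by first upgrading the hypothesis ``$R\to R/I$ is large'' to the statement that $R/I$ is a Koszul $R$-module, and then reading off Koszulness of $R/\fm I$ from the internal grading of the Golod resolution that Corollary~\ref{exlin} produces.

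First I would pin down the shape of $I$ (the case $I=0$ being trivial, since then $\fm I=0$, so assume $I\neq 0$). Largeness of $R\to R/I$ forces $I\cap\fm^2=\fm I$ by the Necessary Condition~\ref{NC}. Since $R$ is a graded Koszul algebra it is generated in degree $1$, so every homogeneous element of degree $\ge 2$ lies in $\fm^2$; hence a minimal homogeneous generator of $I$ of degree $\ge 2$ would lie in $(I\cap\fm^2)\setminus\fm I$, which is empty. Therefore $I$ is generated by linear forms, so $\fm I$ is generated in degree $2$ and $I/\fm I$ is a finite direct sum of copies of $k(-1)$. Next, by Definition~\ref{D1}(2) largeness says the map $\Tor^R_*(R/I,k)\to\Tor^R_*(k,k)$ is injective; this map is induced by the degree-zero surjection $R/I\to k$, hence preserves internal degree. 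As $R$ is Koszul, $\Tor^R_i(k,k)$ is concentrated in internal degree $i$, so the same holds for $\Tor^R_i(R/I,k)$; thus $\reg_R(R/I)=0$, i.e.\ $R/I$ is a Koszul $R$-module. Corollary~\ref{exlin} now yields that $R\to R/\fm I$ is a Golod homomorphism, which is the first assertion.

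For the second assertion, set $S=R/\fm I$. From $0\to I/\fm I\to S\to R/I\to 0$ and the facts above, $\reg_R(S)\le\max\{\reg_R(I/\fm I),\reg_R(R/I)\}\le 1$, while minimality of free resolutions together with $\fm I$ being generated in degree $2$ forces $\Tor^R_i(S,k)\cong\Tor^R_{i-1}(\fm I,k)$ to be concentrated in internal degree exactly $i+1$ for every $i\ge 1$. Since $R\to S$ is Golod and all data are homogeneous, the internally graded refinement of the formula in Definition~\ref{GH} reads $\Pn^S_k(u,t)=\Pn^R_k(u,t)\big/\bigl(1-\sum_{i\ge 1,\,j}\dim_k\Tor^R_i(S,k)_j\,u^{j}t^{i+1}\bigr)$. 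The numerator $\Pn^R_k(u,t)$ is a power series in the single variable $ut$ because $R$ is Koszul, and by the degree computation $\sum_{i\ge 1,\,j}\dim_k\Tor^R_i(S,k)_j\,u^{j}t^{i+1}=\sum_{i\ge 1}\dim_k\Tor^R_i(S,k)_{i+1}\,(ut)^{i+1}$ is also a power series in $ut$; hence so is $\Pn^S_k(u,t)$, which means $k$ has a linear $S$-free resolution, i.e.\ $S=R/\fm I$ is a Koszul ring. (Concretely, this is visible in the Golod resolution of $k$ over $S$ built from the trivial Massey operation on $U\otimes_RS$, where $U$ is the minimal $R$-free resolution of $k$: as a graded free $S$-module it is the tensor product of $U\otimes_RS$ with the tensor algebra on variables $Y_\lambda$ indexed by a homogeneous basis $\{h_\lambda\}$ of $\Tor^R_{\ge 1}(S,k)$, with $Y_\lambda$ of homological degree $|h_\lambda|+1$ and internal degree $|h_\lambda|_{\mathrm{int}}=|h_\lambda|+1$; so every generator, and hence every monomial, has internal degree equal to its homological degree.)

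The step that needs the most care is the last one: one must be sure the graded refinement of the Golod formula applies, i.e.\ that the Golod construction genuinely outputs a \emph{minimal} $S$-free resolution of $k$ and that the homological suspension on the Massey-operation variables is exactly compensated by the shift ``$\Tor^R_i(S,k)=\Tor^R_{i-1}(\fm I,k)$ sits in internal degree $i+1$''. A cleaner alternative, if one is willing to cite it, is the known fact that a Golod quotient of a Koszul algebra by an ideal with a linear free resolution is again Koszul.
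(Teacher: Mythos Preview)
Your argument is correct. The first half---showing $R/I$ is a Koszul $R$-module via the injection $\Tor^R_*(R/I,k)\hookrightarrow\Tor^R_*(k,k)$ and then invoking Corollary~\ref{exlin}---is exactly what the paper does. For the Koszulness of $S=R/\fm I$, however, you take a genuinely different route. The paper argues structurally: writing $\fm=I+J$ with $I=(x_1,\dots,x_r)$ and $J=(x_{r+1},\dots,x_n)$, it checks that $\bar I\cap\bar J=0$ in $S$, so $S$ is the fiber product $(R/I)\times_k T$ with $\fn_T^2=0$; since $R/I$ is Koszul (by surjectivity of $\Tor^R_*(k,k)\to\Tor^{R/I}_*(k,k)$) and $T$ trivially is, Moore's theorem on fiber products finishes. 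Your approach instead stays on the numerical side: you pin down the bigrading of $\Tor^R_*(S,k)$ and feed it into the graded Golod formula to force $\Pn^S_k(u,t)$ to be a series in $ut$. This avoids the external citation to \cite{M} and the fiber-product verification, at the cost of invoking the graded refinement of the Golod construction (which, as you note, needs the minimality of the Golod resolution---true here since $U$ is minimal and the Massey outputs land in $\fm A$). One small phrasing point: the clause ``minimality \dots\ forces \dots\ exactly $i+1$'' reads as if minimality alone gives both bounds; it gives only the lower bound $\ge i+1$, and you need the $\reg_R(S)\le 1$ inequality from the previous sentence for the upper bound. Making that dependence explicit would tighten the exposition.
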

\begin{proof} Set $S=R/\fm I$.  Since $R$ is Koszul, we have $\reg_R(k)=0$, and since $R \to R/I$ is large, we have the induced map $\Tor^R_*(R/I,k) \to \Tor^R_*(k,k)$ is injective. This implies that $\reg_R(R/I)=0$  and hence it is a Koszul $R$-module.
Therefore the map $R \to S$ is a Golod homomorphism by Corollary \ref{exlin}.

Next, we show that $S$ is Koszul. Since $R\to R/I$ is large, we have $I\cap \fm^2=\fm I$ by \ref{NC}. Then we may assume  
$\fm=(x_1,\dots,x_r,x_{r+1},\dots,x_n)$ such that $I=(x_1,\dots, x_r)$. Let $J=(x_{r+1},\dots,x_n)$, and let $\bar{I}$ and $\bar{J}$ be ideals of $S$ generated by images of $I$ and $J$, respectively. Then one easily checks that $\bar{I}\cap\bar{J}=0$ and therefore $S$ is the fiber product of $S/\bar{I}$ and $S/\bar{J}$. We have $S/\bar{I} \cong R/I$. Since $R$ is Koszul and $R\to R/I$ is large, the surjectivity of $\Tor^R_*(k,k)\to \Tor^{R/I}_*(k,k)$ shows that $R/I$ is Koszul.
On the other hand, $S/\bar{J}$ is isomorphic to a graded local ring $(T,\fn)$ with $\fn^2=0$ which is Koszul.
Now, by \cite[Proposition 3.11]{M}, $S$ is Koszul.
\end{proof} 

A Koszul local ring $R$ is called \emph{absolutely Koszul} if $\ld_R(M)<\infty$ for every finitely generated $R$-module $M$.

\begin{cor}\label{AK} Let $(R,\fm,k)$ be a graded Koszul complete intersection local ring and let $I$ be an ideal of $R$. If ${R\to} R/I$ is {large} then $R/\fm I$ is absolutely Koszul.
\end{cor}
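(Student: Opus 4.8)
The plan is to reduce the statement, by way of Corollary~\ref{GK}, to two facts about the linearity defect: that a Koszul complete intersection is absolutely Koszul, and that finiteness of the linearity defect is inherited along a Golod homomorphism with Koszul target. First, since $R\to R/I$ is large we have $I\cap\fm^2=\fm I$ by~\ref{NC}; as $R$ is graded Koszul and $I$ is homogeneous, Corollary~\ref{GK} applies and shows that $\varphi\colon R\to S:=R/\fm I$ is a Golod homomorphism and that $S$ is a Koszul ring. It therefore remains to promote ``$S$ is Koszul'' to ``$S$ is absolutely Koszul,'' and it is here that the extra hypothesis that $R$ is a complete intersection enters.

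Next, I would invoke the theorem of Herzog and Iyengar that a Koszul complete intersection is absolutely Koszul \cite{HI}: the defining ideal of the standard graded $k$-algebra $R$ is generated by quadrics, and $R$ being a complete intersection these quadrics form a regular sequence, so $R$ is covered by that result. Consequently $\ld_R M<\infty$ for every finitely generated $R$-module $M$; in particular $\ld_R S<\infty$.

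Finally, let $M$ be an arbitrary finitely generated $S$-module, regarded as an $R$-module through $\varphi$. By the previous step $\ld_R M<\infty$. I would then apply the change-of-rings behaviour of the linearity defect along the Golod homomorphism $\varphi$: since $\varphi$ is Golod and $S$ is a Koszul ring (and $\ld_R S<\infty$), finiteness of $\ld_R M$ forces finiteness of $\ld_S M$ \cite{HI}. As $M$ was arbitrary, $S=R/\fm I$ is absolutely Koszul.

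The step I expect to require the most care is this last transfer: one must check that the exact hypotheses of the Golod-homomorphism statement are in force here — in particular that the Koszulness of $S$ furnished by Corollary~\ref{GK} is the correct input (rather than an independent bound on $\reg_R S$), while finiteness of $\ld_R S$ is automatic once $R$ is known to be absolutely Koszul. The remaining steps are direct applications of results already at hand.
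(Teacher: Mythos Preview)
Your approach is correct and coincides with the paper's: the paper's proof is the one-liner ``This follows by Corollary~\ref{GK} and \cite[Theorem~5.9]{HI},'' and you have simply unpacked what that citation provides---namely, that $R\to R/\fm I$ is Golod with Koszul target (from Corollary~\ref{GK}), that the Koszul complete intersection $R$ is absolutely Koszul, and that absolute Koszulness transfers along a Golod map with Koszul target (both from \cite{HI}). Your added remark that $I$ should be homogeneous is apt, since Corollary~\ref{GK} requires it even though the statement of Corollary~\ref{AK} does not say so explicitly.
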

\begin{proof} This follows by Corollary \ref{GK} and \cite[Theorem 5.9]{HI}.
\end{proof}

\begin{rem}\label{short} Let $(R,\fm,k)$ be a local Gorenstein ring with $\fm^3=0$. Let $I$ be a non zero ideal of $R$ such that $I\nsubseteq \fm^2$. {Then by \cite[Corollary 4.7]{AIS} $R/I$ is a Koszul $R$-module. Therefore the map $R\rightarrow R/I$ is large and $R\to R/(0:_R\fm)$ is Golod by Corollary \ref{exlin}. The later has been proven for all Artinian Gorenstein rings; see \cite[Theorem 2]{AL}.
Note that $R \to R/I$ may not be large if $R$ is not Gorenstein; see \cite[Example 3.12]{GT}.}
\end{rem}

\begin{exam}\label{E2} Let $R=k[x,y,z]/(x^2,y^2,z^2)$, where $k$ is a field.
Then $R$ is an Artinian Koszul complete intersection with $\fm^4=0$. Consider the ideal $I=(x+y+z)$ of $R$. Then $R/I\cong k[y,z]/(y^2,z^2,yz)$ which is not a complete intersection. Therefore $R\rightarrow R/I$ is not large by Proposition \ref{CI}.
If $k$ has characteristic different than $2$ then $\fm I=(xy,xz,yz)$ and therefore $R/\fm I=R/\fm^2$. This shows that the converse of Corollary \ref{AK} is not true. Also, one checks that $R\to R/\fm I$ is a Golod homomorphism. Therefore in Theorem \ref{Tor}, Golodness of $R\to R/\fm I$ does not guarantee that $R\to R/I$ is large.
\end{exam}

\begin{prop} Let $(R,\fm,k)$ be a local ring and let $I$ be an ideal of $R$. Assume $\ld_R(R/I)=l<\infty$. If the induced maps $\H_1(I) \to \H_1(R)$ and $\Tor^R_l(R/I,k) \to \Tor^R_l(k,k)$ are respectively non zero and injective, then the map $R\to R/I$ is large.
\end{prop}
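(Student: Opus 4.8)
The plan is to deduce this from Proposition \ref{nonzero}. That result says $R\to R/I$ is large as soon as $I\cap\fm^2=\fm I$ (which is implicit here, since the natural map $\H_1(I)\to\H_1(R)$ — and hence the hypothesis that it is non-zero — only makes sense under it), the map $\H_1(I)\to\H_1(R)$ is non-zero (given), and $\varphi_i\colon\Tor^R_i(R/I,k)\to\Tor^R_i(k,k)$ is injective for all $i\gg 0$. So the entire content is to promote injectivity of $\varphi_l$ to injectivity of $\varphi_i$ for every $i\ge l$; once that is done, Proposition \ref{nonzero} applies verbatim.

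First I would pass to the $l$-th syzygy. Since $\ld_R(R/I)=l<\infty$, the module $N:=\Omega^l_R(R/I)$ is a Koszul module by \cite{HI}. Lifting the augmentation $\varphi\colon R/I\to k$ to a morphism of minimal free resolutions and truncating it in homological degrees $\ge l$ yields an $R$-homomorphism $\psi\colon N\to\Omega^l_R(k)$ with $\Tor^R_i(\psi,k)=\varphi_{l+i}$ for all $i\ge 0$; in particular $\psi\otimes_R k$ is identified with $\varphi_l$, hence is injective. It therefore suffices to prove: \emph{if $N$ is a Koszul $R$-module and $\psi\colon N\to N'$ is an $R$-homomorphism with $\psi\otimes_R k$ injective, then $\Tor^R_i(\psi,k)$ is injective for all $i\ge 0$.}

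For this I would dualize to Ext. Because $N$ is Koszul, $\lin^R$ of its minimal free resolution is the minimal graded free resolution of $\gr_\fm N$ over $\gr_\fm R$ by \cite[Proposition 1.5]{HI}, so $\Tor^R_*(N,k)$ sits on the diagonal and — as $\gr_\fm N$ has a linear resolution — $\Ext^*_R(N,k)$ is generated in degree $0$ as a module over $\Ext^*_R(k,k)$. The map $\Ext^*_R(\psi,k)\colon\Ext^*_R(N',k)\to\Ext^*_R(N,k)$ is $\Ext^*_R(k,k)$-linear, and, being the $k$-dual of the injection $\psi\otimes_R k$, it is surjective in degree $0$; an $\Ext^*_R(k,k)$-linear map into a module generated in degree $0$ which is surjective in degree $0$ is surjective in all degrees, so $\Ext^i_R(\psi,k)$ is surjective and hence $\Tor^R_i(\psi,k)$ is injective for every $i$. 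Combined with the reduction above, Proposition \ref{nonzero} gives that $R\to R/I$ is large.

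The main obstacle is the claim in the last paragraph that a Koszul $R$-module $N$ has $\Ext^*_R(N,k)$ generated in degree $0$ over $\Ext^*_R(k,k)$ even when $R$ (equivalently $\gr_\fm R$) is not Koszul: one must compare the minimal $R$-resolution of $N$ with the minimal $\gr_\fm R$-resolution of $\gr_\fm N$ and verify that the module structure over the linear strand of $\Tor^R_*(k,k)$ is not lost in the passage $N\rightsquigarrow\gr_\fm N$. This is exactly the place where the structure theory of linear resolutions of Herzog--Iyengar and \c{S}ega (\cite{HI},\cite{S}) is needed; the rest of the argument is formal.
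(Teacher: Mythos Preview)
Your overall plan coincides with the paper's: both argue that $\varphi_i\colon\Tor^R_i(R/I,k)\to\Tor^R_i(k,k)$ is injective for every $i\ge l$ and then invoke Proposition~\ref{nonzero}. The difference is only in how that injectivity is established.

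The paper does it by a short induction directly on the comparison map $f\colon F\to G$ of minimal free resolutions, with no passage to syzygies and no Ext. Assuming $f_{n-1}$ is split injective for some $n>l$ and taking $e\in F_n\setminus\fm F_n$, if $f_n(e)\in\fm G_n$ then $f_{n-1}\partial^F_n(e)=\partial^G_nf_n(e)\in\fm^2G_{n-1}$, whence $\partial^F_n(e)\in\fm^2F_{n-1}$ by split injectivity of $f_{n-1}$; but then the class of $e$ in $(\lin^R F)_n$ is a nonzero cycle of minimal internal degree, hence not a boundary, contradicting $\H_n(\lin^R F)=0$. That is the entire inductive step.

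Your detour through $\Ext$ is correct but heavier. The ``obstacle'' you isolate --- that $\Ext^*_R(N,k)$ is generated in degree $0$ over $\Ext^*_R(k,k)$ when $N$ is Koszul --- unwinds, after $k$-dualising, to the injectivity of $\Tor^R_*(N,k)\to\Tor^R_*(N/\fm N,k)$, equivalently the vanishing of $\Tor^R_*(\fm N,k)\to\Tor^R_*(N,k)$. This is indeed part of the characterisation of Koszul modules (cf.\ \cite[\S2]{S}) and needs no Koszul hypothesis on $R$, so your argument goes through. Note, however, that the paper's induction is in effect a self-contained, elementary proof of exactly the instance of this fact that you need (applied to the single map $\psi$ rather than to all of $\Hom_R(N,k)$); it thereby sidesteps both the Yoneda formalism and the appeal to the structure theory you flag.
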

\begin{proof} Let $F$ and $G$ respectively be minimal free resolutions of $R/I$ and $k$ over $R$.
Let $f: F \to G$ be the comparison homomorphism induced by $R/I \to k$. We show by induction that $f_i$ is split injective for all $i\geq l$. The injectivity of $\Tor^R_l(R/I,k) \to \Tor^R_l(k,k)$ settles the case $i=l$.
Let $n> l$ and suppose $f_i$ is split injective for $l\leq i < n$. Consider the commutative diagram 
$$
\begin{CD}
\dots @>>> F_n @>\partial^F_n>> F_{n-1} @>>> \dots\\
@. @V f_n VV @V f_{n-1} VV \\
\dots @>>> G_n @>\partial^G_n>> G_{n-1} @>>> \dots
\end{CD}
$$
and let $e\in F_n \setminus \fm F_n$. If $f_n(e) \in \fm G_n$ then $f_{n-1} \partial^F_n(e) \in \fm^2 G_{n-1}$. Since $f_{n-1}$ is split injective, we have $\partial^F_n(e) \in \fm^2 F_{n-1}$. As $\ld_R(R/I)<n$, this is a contradiction and hence $f_n(e) \in G_n\backslash \fm G_n$. 
Therefore the induced map $\Tor^R_i(R/I,k) \to \Tor^R_i(k,k)$ is injective for all $i\geq l$. Now by using Proposition \ref{nonzero}, the induced map $R\to R/I$ is large.
\end{proof}

\begin{defn} A local ring $(R,\fm,k)$ is called \emph{Golod} if the Poincar\'{e} series of $k$ over $R$ has presentation
$$\Pn^R_k(t)=\frac{(1+t)^{\nu_R(\fm)}}{1-\sum_{i\geq 1}\dim_k(\H_i(R))t^{i+1}},$$
where $\nu_R(\fm)$ is the number of minimal generating set of $\fm$.
\end{defn}

The following gives a sufficient condition for large homomorphisms over Golod rings by using the Koszul homologies.

\begin{prop}\label{Massey} Let $(R,\fm, k)$ be a Golod local ring, and let $f:R\to S$ be a surjective local homomorphism. 
If the natural map \emph{$\H_i(R)\rightarrow \H_i(S)$} is surjective for all $i\geq 1$, then $f$ is a large homomorphism.
\end{prop}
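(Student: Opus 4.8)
The plan is to verify Definition \ref{D1}(1), namely that the induced map $f_*\colon\Tor^R_*(k,k)\to\Tor^S_*(k,k)$ is surjective. The idea is to compute both $\Tor$-algebras from Golod's resolution and to exhibit a \emph{surjective} comparison map between them, the hypothesis being used to transport the Golod data from $R$ to $S$.

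First some preliminary remarks. Write $I=\ker f$. Surjectivity of $\H_1(R)\to\H_1(S)$ forces $\nu_R(\fm)-\nu_S(\fm_S)$ to equal $\nu_R(I)$, equivalently $I\cap\fm^2=\fm I$, so that $I$ is generated by part of a minimal generating set of $\fm$; I would record this for bookkeeping, although it is not logically essential below. Since $R$ is Golod, by Golod's theorem the Koszul complex $\K(R)$ carries a trivial Massey operation $\mu$ on a chosen $k$-basis $\mathbf h$ of $\H_{\geq1}(\K(R))$, the minimal free resolution of $k$ over $R$ is Golod's resolution $\mathbb F=\K(R)\otimes_kT$ (with $T$ the tensor algebra on $\mathbf h$ placed one homological degree higher and differential assembled from $\partial^{\K(R)}$ and the maps $\mu(\,\cdot\,)$), and in particular $\H_{\geq1}(\K(R))\cdot\H_{\geq1}(\K(R))=0$.

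Next I would transport the Golod structure. Let $\rho\colon\K(R)\otimes_RS\to\K(S)$ be the surjective morphism of DG algebras that deletes the redundant Koszul generators, and let $\phi\colon\K(R)\to\K(S)$ be the composite of the base change $\K(R)\to\K(R)\otimes_RS$ with $\rho$; on homology $\phi$ realizes the natural map $\H_*(R)\to\H_*(S)$, hence is surjective by hypothesis. I would split this surjection, i.e.\ choose $\mathbf h$ so that a subset of it is carried bijectively onto a basis $\mathbf h'$ of $\H_{\geq1}(\K(S))$ while the remaining elements go to $0$; writing $\tilde h_\nu\in\mathbf h$ for the lift of $h'_\nu$, the assignment $\mu'(h'_{\nu_1},\dots,h'_{\nu_j}):=\phi\bigl(\mu(\tilde h_{\nu_1},\dots,\tilde h_{\nu_j})\bigr)$ is a trivial Massey operation on $\K(S)$: condition (1) holds because $\phi$ sends a cycle representing $\tilde h_\nu$ to one representing $h'_\nu$, and condition (2) holds because $\phi$ respects products and differentials. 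Thus $S$ is Golod, with Golod resolution $\mathbb G=\K(S)\otimes_kT'$, where $T'$ is the tensor algebra on $\mathbf h'$ shifted up.

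Finally I would assemble the comparison map $\mathbb F\otimes_RS\to\mathbb G$: on the Koszul factor use $\rho$, and on the tensor factor the multiplicative extension of $\mathbf h\to\mathbf h'\cup\{0\}$. This is degreewise surjective, and comparing Golod differentials shows it is a chain map up to a homotopy supported in strictly lower tensor length — the correction is needed precisely because the ``kernel'' elements of $\mathbf h$ are represented by cycles that $\rho$ sends to boundaries rather than to $0$ in $\K(S)$ — and adding such a correction preserves surjectivity. Since $\mathbb F$ and $\mathbb G$ are the minimal resolutions of $k$ over $R$ and $S$, applying $-\otimes_Sk$ turns this into a surjection of complexes with zero differentials, namely $f_*\colon\Tor^R_*(k,k)\to\Tor^S_*(k,k)$; hence $f$ is large. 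The main obstacle is exactly this last step: verifying that the corrected comparison map is a genuine chain map while remaining surjective, which is a careful but routine bookkeeping with Golod's differential and the transported Massey data. A variant that avoids explicit comparison maps is to stop once $S$ is known to be Golod and then play the two Golod Poincar\'e series $\Pn^R_k(t)=(1+t)^{\nu_R(\fm)}/D_R(t)$ and $\Pn^S_k(t)=(1+t)^{\nu_S(\fm_S)}/D_S(t)$, with $D_\bullet(t)=1-\sum_{i\geq1}\dim_k\H_i(\bullet)\,t^{i+1}$, against a direct computation of $\Pn^R_S(t)$ obtained from the fact that $I$ is part of a minimal generating set of $\fm$, checking the identity $\Pn^R_k(t)=\Pn^R_S(t)\cdot\Pn^S_k(t)$ of Definition \ref{D1}(3).
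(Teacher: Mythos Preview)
Your main argument is exactly the paper's: push the trivial Massey operation on $\K(R)$ forward along the surjection $\H_*(R)\twoheadrightarrow\H_*(S)$ to see that $S$ is Golod, then produce a surjective chain map between the two Golod (hence minimal) resolutions of $k$ described in \cite[Theorem~5.2.2]{Av} and read off surjectivity of $f_*$. The paper is in fact terser than you---it simply asserts that the natural comparison map between the Golod resolutions exists and is surjective, without mentioning the boundary/homotopy-correction issue you flag---so your treatment of $\mu'$ and of the chain-map subtlety is, if anything, more careful than the original; your Poincar\'e-series variant, however, would require an independent computation of $\Pn^R_S(t)$, which the hypotheses do not directly supply.
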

\begin{proof} Note that $R$ is Golod if and only if $\K(R)$ admits a trivial Massey operation; see \cite[Theorem 5.2.2]{Av} and \cite[Corollary 4.2.4]{GL}. Since $R$ is Golod, $\K(R)$ admits a trivial Massey operation. The surjectivity of $\H_i(R)\rightarrow \H_i(S)$ implies that $\K(S)$ admits a trivial Massey operation too and therefore it is a Golod ring. Let $F$ and $G$ respectively be minimal free resolutions of $k$ over $R$ and $S$ described in \cite[Theorem 5.2.2]{Av}. Then there exists a natural chain map $f:F\rightarrow G$ which only depends on the choice of $k$-bases of $\H_{i\geq 1}(R)$ and $\H_{i\geq 1}(S)$. Since $\H_i(R)\rightarrow \H_i(S)$ is surjective, $f$ is surjective as well. Therefore the induced map $\Tor_{*}^R(k,k)\rightarrow \Tor^S_*(k,k)$ is surjective as well. This finishes the proof.
\end{proof}

Recently, Gupta proved that if $R$ is a Golod local ring and a homomorphism $f: R\to S$ is large, then $S$ is Golod; see \cite[Theorem 1.5]{G}. This result provides a useful tool to detect Golod rings by using large homomorphism. 

\begin{exam} Let $R=\displaystyle\frac{k[x,y,z]}{(x^2,xy,xz,y^2,z^2)}$. Then $R$ is an Artinian  local ring with the maximal ideal $\fm=(x,y,z)$.
We have $R/(x)\cong k[y,z]/(y^2,z^2)$ is a complete intersection of codimension two. Therefore $R\rightarrow R/(x)$ is large by Example \ref{E1}(3), and $R/(x)$ is not Golod. Hence $R$ is not Golod.
\end{exam}

\begin{lem}\label{power} Let $(Q,\fn,k)$ be a regular local ring, $R=Q/\fn^p$ with $p\geq 2$, and let $\fm$ be the maximal ideal of $R$. Let $I$ be an ideal of $R$ such that $I\cap \fm^2=\fm I$. Then the natural map \emph{$\fm^{p-1}\K_i(I)\rightarrow \H_i(I)$} is surjective and therefore splits for all $i \geq 1$.
\end{lem}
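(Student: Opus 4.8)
The plan is to reduce to the associated graded ring and then bootstrap a Koszul cycle up the $\fm$-adic filtration. For the setup, note first that since $I\cap\fm^2=\fm I$, a minimal generating set $x_1,\dots,x_l$ of $I$ extends to a minimal generating set $x_1,\dots,x_n$ of $\fm$; lifting these to $Q$ we obtain part of a regular system of parameters, so that $\gr_\fn(Q)$ is the polynomial ring $P=k[x_1,\dots,x_n]$ and $x_1,\dots,x_l$ is a $P$-regular sequence of linear forms. Since $\fm^p=0$, the differential of $\K(I)$, whose entries lie in $I\subseteq\fm$, carries $\fm^{p-1}\K_i(I)$ into $\fm^p\K_{i-1}(I)=0$; hence $\fm^{p-1}\K_i(I)$ consists of cycles, the natural map $\fm^{p-1}\K_i(I)\to\H_i(I)$ is defined, and (its source being annihilated by $\fm$) once this map is shown surjective the target is a $k$-vector space, so the surjection splits automatically. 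Thus it remains only to prove that for $i\ge1$ every cycle $z\in\K_i(I)$ lies in $\fm^{p-1}\K_i(I)+\partial\bigl(\K_{i+1}(I)\bigr)$.

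I would establish this by induction on the $\fm$-adic order: for each $0\le t\le p-2$, a cycle $z\in\fm^t\K_i(I)$ can be corrected by a boundary so as to lie in $\fm^{t+1}\K_i(I)$, and iterating from $t=0$ gives the conclusion. Writing $z=\sum_{|J|=i}a_Je_J$ with $a_J\in\fm^t$, let $\alpha_J$ denote the image of $a_J$ in $\fm^t/\fm^{t+1}\cong P_t$. Lifting the relations $\partial z=0$ to $Q$ and reading off the homogeneous component of degree $t+1$ in $P=\gr_\fn(Q)$ — which must vanish, since the relevant elements of $Q$ lie in $\fn^p$ and $t+1<p$ — shows exactly that $\zeta:=\sum_{|J|=i}\alpha_Je_J$ is a cycle of internal degree $t+i$ in the Koszul complex $\K^P(x_1,\dots,x_l)$. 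As $x_1,\dots,x_l$ is a $P$-regular sequence, this complex resolves $P/(x_1,\dots,x_l)$ and is acyclic in homological degrees $\ge1$; since $i\ge1$ there is a homogeneous $\omega=\sum_{|L|=i+1}\beta_Le_L$ with $\beta_L\in P_{t-1}$ and $\partial^{\K^P}(\omega)=\zeta$ (for $t=0$ this forces $\omega=0$ and $\zeta=0$, which merely reproves that a positive-degree cycle has positive order). Choosing lifts of the $\beta_L$ in $\fm^{t-1}$ and assembling $\tilde\omega\in\K_{i+1}(I)$, a comparison of degree-$t$ components shows that every coefficient of $z-\partial\tilde\omega$ lies in $\fm^{t+1}$; and $z-\partial\tilde\omega$ is again a cycle, completing the step.

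The only genuine difficulty I anticipate is the bookkeeping in this inductive step: checking that $\partial z=0$ really produces an honest Koszul cycle over $P$ in the right internal degree — which is precisely where the hypothesis $R=Q/\fn^p$ enters, ensuring the higher-order error terms are swallowed by $\fn^p$ — and that the lift $\tilde\omega$ kills the leading term of $z$ without introducing anything of order $\le t$. The top homological degree $i=l$ is the degenerate case $\K_{i+1}(I)=0$, where the argument instead forces the single coefficient of a top cycle into $\fm^{p-1}$; this is consistent with $\H_l(I)=(0:_RI)=\fm^{p-1}$, the socle of $R$.
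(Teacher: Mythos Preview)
Your argument is correct and is genuinely different from the paper's. The paper inducts on the number $r$ of generators of $I$: the base case $r=1$ is $\H_1(I)=(0:_R\overline{x}_1)\K_1(I)=\fm^{p-1}\K_1(I)$, and the inductive step uses the long exact sequence of Koszul homology for $J=(\overline{x}_1,\dots,\overline{x}_{r-1})\subset I$ together with a commutative diagram comparing the surjections $\fm^{p-1}\K_i(J)\twoheadrightarrow\H_i(J)$ and $\fm^{p-1}\K_{i-1}(J)\twoheadrightarrow\H_{i-1}(J)$ to the corresponding map for $I$. You instead induct on the $\fm$-adic order of a cycle, passing to $\gr_\fn(Q)=k[x_1,\dots,x_n]$, where $x_1,\dots,x_l$ is a regular sequence and the Koszul complex is acyclic, and then lifting a boundary back to $R$ to raise the order by one. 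The paper's route is shorter once one has the Koszul long exact sequence in hand and avoids any element-level lifting; your route is more self-contained, makes transparent exactly where the hypothesis $R=Q/\fn^p$ enters (the error terms from lifting $\partial z=0$ land in $\fn^p\subseteq\fn^{t+2}$), and would adapt to other situations where the associated graded has a regular sequence but no convenient induction on generators is available. Your handling of the edge cases ($t=0$ forcing $\zeta=0$; $i=l$ forcing $\omega=0$) and of the splitting (both source and target are $k$-vector spaces) is fine.
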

\begin{proof} Let $x_1,\dots,x_n$ be a minimal generating set of $\fn$, and let $\overline{x}_i$ be the image of $x_i$ in $R$.
We may assume that $I=(\overline{x}_1,\dots,\overline{x}_r)$ for some $1\leq r \leq n$. If $r=1$ then $I=(\overline{x}_1)$ and we have $\H_1(I)=(0:_R\overline{x}_1)\K_1(I)=\fm^{p-1}\K_1(I)$.

Assume $r>1$ and set $J=(\overline{x}_1,\dots,\overline{x}_{r-1})$. There exists an exact sequence $$\dots \rightarrow\H_{i}(J)\overset{\overline{x}_r}\longrightarrow \H_i(J)\longrightarrow \H_i(I)\longrightarrow \H_{i-1}(J)\overset{\overline{x}_{r}}\longrightarrow \dots.$$
By induction hypothesis $f_i: \fm^{p-1}\K_i(J)\rightarrow \H_{i}(J)$ is surjective and so $\H_{i}(J)$ is a $k$-vector space for all $i\geq 1$. Thus the multiplication with $\overline{x}_r$ is zero map when $i\geq 1$. Therefore for $i\geq 2$ there is a commutative diagram
$$
\begin{CD}
0 @>>> \fm^{p-1}\K_i(J) @>>> \fm^{p-1}\K_i(I) @>>> \fm^{p-1}\K_{i-1}(J) @>>> 0\\
@. @Vf_i VV @Vg_iVV @Vf_{i-1} VV \\
0 @>>> \H_{i}(J) @>>> \H_i(I) @>>> \H_{i-1}(J) @>>> 0 .
\end{CD}
$$
By induction $f_{i-1}$ and $f_i$ are surjective and therefore $g_i$ is surjective for $i\geq 2$. When $i=1$ the last diagram turns to
$$
\begin{CD}
 0 @>>> \fm^{p-1}\K_1(J) @>>> \fm^{p-1}\K_1(I) @>>> \fm^{p-1} @>>> 0\\
@. @Vf_1 VV @Vg_1VV @Vf_{0} VV \\
 0 @>>> \H_{1}(J) @>>> \H_1(I) @>>> (0:_{R/J}\overline{x}_r) @>>> 0.
\end{CD}
$$

Note that $R/J\cong Q'/\fn'^p$ where $Q'=Q/(x_1,\dots,x_{r-1})$ is a regular local ring whose maximal ideal is $\fn'=(\overline{x}_r,\dots,\overline{x}_n)$. One observes that $f_0$ is a natural map of $k$-vector spaces such that $f_0(\overline{x}_r^{\alpha_r}\cdots\overline{x}_n^{\alpha_n})=\overline{x}_r^{\alpha_r}\cdots \overline{x}_n^{\alpha_n}$ where $\alpha_r+\dots+\alpha_n=p-1$. Therefore $f_0$ is surjective and since $f_1$ is surjective by induction, $g_1$ is surjective as well.
\end{proof} 

\begin{cor} Let $(Q,\fn,k)$ be a regular local ring. Let $R=Q/\fn^p$ where $p\geq 2$, and let $\fm$ be the maximal ideal of $R$. Then for any ideal $I$ of $R$ such that $I\cap \fm^2=\fm I$ one has $R\rightarrow R/I$ is a large homomorphism.
\end{cor}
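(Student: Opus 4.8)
The plan is to deduce the statement from Proposition~\ref{Massey}. For that I need two facts: that $R=Q/\fn^p$ is a Golod ring, and that the natural map $\H_i(R)\to\H_i(R/I)$ is surjective for every $i\geq1$. Given both, Proposition~\ref{Massey} applied to the surjection $R\to R/I$ immediately shows that $R\to R/I$ is large. Both facts will be extracted from Lemma~\ref{power}.

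First I would reduce $R/I$ to the same shape as $R$. Since $I\cap\fm^2=\fm I$, a minimal generating set of $I$ completes to one of $\fm$ by~\ref{NC}; fix generators $x_1,\dots,x_n$ of $\fm$ with $I=(x_1,\dots,x_r)$ and lift $x_1,\dots,x_r$ to elements $y_1,\dots,y_r\in\fn$. Their images are linearly independent in $\fn/\fn^2\cong\fm/\fm^2$, so $y_1,\dots,y_r$ is part of a regular system of parameters of $Q$; hence $Q'=Q/(y_1,\dots,y_r)$ is again a regular local ring, with maximal ideal $\fn'=\fn Q'$, and $R/I\cong Q/\bigl((y_1,\dots,y_r)+\fn^p\bigr)\cong Q'/\fn'^p$ with $p\geq2$. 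Consequently both $R$ and $S:=R/I$ satisfy the hypothesis of Lemma~\ref{power}, applied with the maximal ideal of the ring itself in the role of the ideal there, so the natural maps $\fm^{p-1}\K_i(\fm)\to\H_i(R)$ and $\fm_S^{p-1}\K_i(\fm_S)\to\H_i(S)$ are surjective for all $i\geq1$ (here $\fm_S$ denotes the maximal ideal of $S$). For Golodness of $R$: by Lemma~\ref{power} with $I=\fm$, every class in $\H_{\geq1}(R)$ has a cycle representative in $\fm^{p-1}\K(R)$, and since $2(p-1)\geq p$ the product of any two such representatives lies in $\fm^{2(p-1)}\K(R)\subseteq\fm^p\K(R)=0$; taking these representatives and declaring all higher $\mu$-values to be zero exhibits a trivial Massey operation on $\K(R)$, so $R$ is Golod (this is classical for $Q/\fn^p$ in any case).

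Finally I would show $\H_i(R)\to\H_i(S)$ is surjective. With the generators chosen as above, $x_{r+1},\dots,x_n$ map to a minimal generating set of $\fm_S$, and the DG algebra map $\phi\colon\K(R)\to\K(S)$ over $R\to S$ with $e_j\mapsto0$ for $j\leq r$ and $e_j\mapsto e_j$ for $j>r$ induces the natural map $\H_i(R)\to\H_i(S)$. Since $\fm\cdot\fm^{p-1}\K(R)=\fm^p\K(R)=0$, every element of $\fm^{p-1}\K_i(\fm)$ is a cycle, and $\phi$ maps $\fm^{p-1}\K_i(\fm)$ onto $\fm_S^{p-1}\K_i(\fm_S)$ (each basis monomial $e_{j_1}\wedge\cdots\wedge e_{j_i}$ with all $j_\ell>r$ is hit, and $\fm^{p-1}$ surjects onto $\fm_S^{p-1}=\fm^{p-1}S$). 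These maps therefore form a commutative square
$$
\begin{CD}
\fm^{p-1}\K_i(\fm) @>>> \H_i(R)\\
@VVV @VVV\\
\fm_S^{p-1}\K_i(\fm_S) @>>> \H_i(S)
\end{CD}
$$
whose top, bottom, and left arrows are surjective; hence the composite $\fm^{p-1}\K_i(\fm)\to\H_i(S)$ through $\H_i(R)$ is surjective, and so $\H_i(R)\to\H_i(S)$ is surjective for every $i\geq1$. Applying Proposition~\ref{Massey} then finishes the proof. The steps needing the most care are the reduction showing $R/I$ again has the form $Q'/\fn'^p$ (so that Lemma~\ref{power} is available on both sides) and the verification that the displayed square commutes with the claimed arrows surjective; neither is difficult once the explicit Koszul representatives furnished by Lemma~\ref{power} are in hand.
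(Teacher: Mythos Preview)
Your argument is correct and follows the same route as the paper: use Lemma~\ref{power} on both $R$ and $S=R/I$ to produce the commutative square with surjective top, bottom, and left arrows, deduce surjectivity of $\H_i(R)\to\H_i(S)$, and then apply Proposition~\ref{Massey}. The only differences are cosmetic: you spell out the reduction $S\cong Q'/\fn'^p$ (which the paper uses without comment in order to apply Lemma~\ref{power} to $S$), and you derive the Golodness of $R$ directly from Lemma~\ref{power} via the vanishing of products in $\fm^{2(p-1)}\K(R)$, whereas the paper simply cites the classical fact from \cite[Theorem~4.2.6]{GL}.
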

\begin{proof} {It is well-known that $R$ is a Golod ring; see \cite[Theorem 4.2.6]{GL}.} Set $S=R/I$ and let $\overline{\fm}$ be the maximal ideal of $S$. {By Lemma \ref{power},} $ \fm^{p-1}\K_i(R)\rightarrow \H_i(R)$ and $\overline{\fm}^{p-1}\K_i(S)\rightarrow \H_i(S)$ are surjective for all $i\geq 1$. Since $\fm^{p-1}\K_i(R)\rightarrow \overline{\fm}^{p-1}\K_i(S)$ is also surjective, the commutative diagram
$$
\begin{CD}
 \fm^{p-1}\K_i(R) @>>> \H_i(R) \\
 @ VVV @ VVV \\
 \overline{\fm}^{p-1}\K_i(S) @>>> \H_i(S)
\end{CD}
$$
shows that the induced map $\H_i(R)\rightarrow \H_i(S)$ is surjective for all $i\geq 1$. Now the result follows from Proposition \ref{Massey} .
\end{proof}

\begin{ac}
The authors thank Rasoul Ahangari Maleki, David Jorgensen, and Liana \c{S}ega for their valuable suggestions and comments on the manuscript. The authors also thank the referee for providing very useful comments and suggestions that improved this article.
\end{ac}


\end{document}